\newtheorem{lem}{Lemma} 
\newtheorem{theorem}{Theorem}
\newtheorem{assump}{Assumption}
\def\mc{\mathcal}
\def\mb{\mathbf}
\def\mbb{\mathbb}
\def\mbb{\mathbb}
\def\mb{\mathbf}
\def\mc{\mathcal}
\def\wt{\widetilde}
\def\ol{\overline}
\def\bds{\boldsymbol}
\newcommand{\mn}[1]{{\left\vert\kern-0.25ex\left\vert\kern-0.25ex\left\vert\kern0.3ex #1 
		\kern0.3ex\right\vert\kern-0.25ex\right\vert\kern-0.25ex\right\vert}}
\begin{document}
	\title{Variance-Reduced Decentralized Stochastic Optimization with Gradient Tracking -- \\Part II:~\textbf{\texttt{GT-SVRG}}}
	\author{
		Ran Xin, Usman A. Khan, and Soummya Kar
		\thanks{
			RX and SK are with the Department of Electrical and Computer Engineering, Carnegie Mellon University, Pittsburgh, PA; {\texttt{ranx@andrew.cmu.edu, soummyak@andrew.cmu.edu}}. UAK is with the Department of Electrical and Computer Engineering, Tufts University, Medford, MA; {\texttt{khan@ece.tufts.edu}}.
			The work of RX and SK has been partially supported by NSF under grant CCF-1513936.
			The work of UAK has been partially supported by NSF under grants CCF-1350264 and CMMI-1903972. 
		}
	}
	\maketitle
	
	\begin{abstract}
	Decentralized stochastic optimization has recently benefited from gradient tracking methods~\cite{DSGT_Pu,DSGT_Xin} providing efficient solutions for large-scale empirical risk minimization problems. In Part I~\cite{GT_SAGA} of this work, we develop \textbf{\texttt{GT-SAGA}} that is based on a decentralized implementation of SAGA~\cite{SAGA} using gradient tracking and discuss regimes of practical interest where~\textbf{\texttt{GT-SAGA}} outperforms existing decentralized approaches in terms of the total number of local gradient computations. In this paper, we describe~\textbf{\texttt{GT-SVRG}} that develops a decentralized gradient tracking based implementation of SVRG~\cite{SVRG}, another well-known variance-reduction technique. We show that the convergence rate of~\textbf{\texttt{GT-SVRG}} matches that of~\textbf{\texttt{GT-SAGA}} for smooth and strongly-convex functions and highlight different trade-offs between the two algorithms in various settings.   
	\end{abstract}
	
	\noindent\fbox{%
    \parbox{\textwidth}{%
  	\begin{center}
	    {\color{red}\textbf{This is a preliminary version of the paper~\url{https://arxiv.org/abs/1912.04230}}}
	\end{center}
    }%
}

	\section{Introduction}\label{pf}
	We consider a network of~$n$ nodes 
	that cooperatively solves an optimization problem of the following form:
	\begin{align*}
	\mbox{P1}:
	\quad\min_{\mb{x}\in\mathbb{R}^p}f(\mb{x})\triangleq\frac{1}{n}\sum_{i=1}^{n}f_i(\mb{x}),
	\qquad
	f_i(\mb{x}) \triangleq \frac{1}{m_i}\sum_{j=1}^{m_i} f_{i,j}(\mb{x}).
	\end{align*}
	Each node~$i$ only processes its own local objective functions~$\{f_{i,j}:\mathbb{R}^p\rightarrow\mathbb{R}\}_{j=1}^{m_i}$. Well-known solutions for such problems include, for example, Decentralized Gradient Descent (DGD)~\cite{DGD_tsitsiklis,DGD_nedich,DGD_Yuan,GP_neidch}, dual averaging~\cite{dual_averaging_Duchi,dual_averaging_Rabbat}, ADMM~\cite{ADMM_Wei,ADMM_Shi}, EXTRA~\cite{EXTRA}, Exact Diffusion~\cite{Exact_Diffusion}, DLM~\cite{DLM}, methods based on gradient-tracking~\cite{GT_first,GT_CDC,NEXT,harnessing,DIGing,add-opt, AGT,MP_scutari,proxDGT,GT_jakovetic,Network-DANE},~$\mc{AB}$/Push-Pull~\cite{AB, push-pull,TV-AB} and dual methods~\cite{dual_optimal_ICML,dual_GT,dual_optimal_uribe}. More recently, significant effort has been made to develop stochastic variants of the aforementioned methods, for example,~\cite{DSGD_nedich,DGD_Kar,SGP_nedich,DSGD_NIPS,SGP_ICML,D2,DSGT_Pu,DSGT_Xin,SED,SGT_nonconvex_you,DSGD_vlaski_1,DSGD_vlaski_2}. These stochastic gradient methods are more favorable when each node has a large number of (local) data samples or in scenarios where each node  receives online, streaming data in real-time.	
	To better leverage the finite-sum structure of the local objective function in Problem P1, several decentralized methods have been proposed~\cite{DSA,DSBA,DAVRG,edge_DSA,ADFS,GT_SAGA} that are based on various variance-reduction techniques~\cite{SAGA,point-SAGA,AVRG,APCG}. These approaches aim to combine the advantages of both deterministic and stochastic gradient methods.

   In this two-part paper, we develop and analyze decentralized, stochastic first-order methods with the help of variance reduction techniques and gradient tracking. In Part I~\cite{GT_SAGA}, we describe~\textbf{\texttt{GT-SAGA}} that is based on SAGA~\cite{SAGA}, while in this Part II, we propose~\textbf{\texttt{GT-SVRG}}, formally described in Algorithm~\ref{gtsvrg_alg}, that is based on stochastic gradient tracking~\cite{DSGT_Pu,DSGT_Xin} and another well-known variance reduction technique called SVRG~\cite{SVRG,S2GD,SVRG-BB}.
   As in the centralized SVRG method~\cite{SVRG}, \textbf{\texttt{GT-SVRG}} has an outer-loop, indexed by~$t$, where each node~$i$ computes its local full gradient~$\wt{\nabla}_i^t$, and an inner-loop, indexed by~$k$, where decentralized stochastic gradient-tracking (type) update is recursively performed. 
   
   \begin{algorithm}[H]\label{GT-SVRG}
	\caption{~\textbf{\texttt{GT-SVRG}} at each node~$i$}
	\label{gtsvrg_alg}
	\begin{algorithmic}[1]
		\Require Arbitrary starting point~$\mb{x}_i^{0,0}\in\mbb{R}^p$ and~step-size~$\alpha>0$.
				
		~~Doubly stochastic weights:~$W=\{w_{ir}\}\in\mathbb{R}^{n\times n}$.
		
		~~Number of iterations for each inner loop:~$K>0$.
		
		~~Gradient Tracker:~$\mb{y}_i^{0,0} = \mb{v}_i^{0,0} = \widetilde{\nabla}_i^0 \triangleq \nabla f_i(\mb{x}_i^{0,0})$.
		
		\For{$t= 0,1,2,\cdots$}
		\For{$k= 0,1,2,\cdots, K-1$}
		\State $\mb{x}_{i}^{t,k+1} = \sum_{r=1}^{n}w_{ir}\mb{x}_{r}^{t,k} - \alpha\mb{y}_{i}^{t,k}$ \Comment{Inner-estimate update}
		\State Select~$s_{i}^{t,k+1}$ uniformly at random from~$\{1,\cdots,m_i\}$.
		\Comment{Sample from  local data}
		\State $\mb{v}_{i}^{t,k+1} = \nabla f_{i,s_i^{t,k+1}}(\mb{x}_{i}^{t,k+1}) - \nabla f_{i,s_i^{t,k+1}}(\mb{x}_i^{t,0}) + \widetilde{\nabla}_i^t$ \Comment{Local SVRG update}
		\State $\mb{y}_{i}^{t,k+1} = \sum_{r=1}^{n}w_{ir}\mb{y}_{r}^{t,k} + \mb{v}_i^{t,k+1} - \mb{v}_i^{t,k}$ \Comment{Gradient Tracker update}
		\EndFor
		\State~$\mb{x}_i^{t+1,0} = \mb{x}_i^{t,K}$,~$\mb{y}^{t+1,0} = \mb{y}^{t,K}$,~$\mb{v}^{t+1,0} = \mb{v}^{t,K}$.
		\Comment{Outer-update} 
		\State $\widetilde{\nabla}_i^{t+1} = \nabla f_i(\mb{x}_i^{t+1,0})$. \Comment{Compute local full gradient, fixed in the next inner loop}
		\EndFor
	\end{algorithmic}
\end{algorithm}	

We study the convergence rate of~\textbf{\texttt{GT-SVRG}} under the following assumptions. 
	\begin{assump}\label{sc}
		Each local objective,~$f_{i,j}$, is~$\mu$-strongly-convex:~$\forall\mb{x}, \mb{y}\in\mbb{R}^p$, we have, for some~$\mu>0$, 
		\begin{equation*}
		f_{i,j}(\mb{y})\geq f_{i,j}(\mb{x})+ \big\langle\nabla f_{i,j}(\mb{x}), \mb{y}-\mb{x}\big\rangle+\frac{\mu}{2}\|\mb{x}-\mb{y}\|^2.
		\end{equation*}
	\end{assump}
	We note that under Assumption 1, the global objective function~$f$ has a unique minimizer, denoted as~$\mb{x}^*$.
	\begin{assump}\label{smooth}
		Each local objective,~$f_{i,j}$, is~$L$-smooth:~$\forall\mb{x}, \mb{y}\in\mbb{R}^p$, we have, for some~$L>0$,
		\begin{equation*}
		\qquad\|\mb{\nabla} f_{i,j}(\mb{x})-\mb{\nabla} f_{i,j}(\mb{y})\|\leq L\|\mb{x}-\mb{y}\|.
		\end{equation*}
	\end{assump}
	\begin{assump}\label{connect}
		The weight matrix~$W$ associated with the network is primitive and doubly-stochastic.
	\end{assump}
\noindent The performance of \texttt{\textbf{GT-SVRG}} is described in the following theorem.

\begin{theorem}\label{main_theorem}
Let Assumption~\ref{sc}-\ref{connect} hold. Define~$\sigma$ as the second largest singular value of the network weight matrix~$W$,~$M\triangleq\max_{i}m_i$,~$m\triangleq\min_{i}m_i$ and~$Q\triangleq L/\mu$, the condition number of~$f$. Then~\textbf{\texttt{GT-SVRG}} achieves~$\epsilon$-accuracy (in terms of distance to the minimizer)~with $$\mc{O}\left(\left(M+\frac{Q^2}{\left(1-\sigma\right)^2}\right)\log\frac{1}{\epsilon}\right)$$
local component gradient computations. 
\end{theorem}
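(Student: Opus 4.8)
The plan is to track three coupled error sequences along the inner loop $k=0,\dots,K-1$, close the resulting recursion over the outer loop $t$, and then pick $\alpha$ and $K$ so that a single outer loop contracts everything by a constant factor. Write $\overline{\mb{x}}^{t,k}=\frac1n\sum_{i=1}^n\mb{x}_i^{t,k}$ and $\overline{\mb{y}}^{t,k}=\frac1n\sum_{i=1}^n\mb{y}_i^{t,k}$. Because $W$ is doubly stochastic, the tracker update in Algorithm~\ref{gtsvrg_alg} preserves the running mean, so $\overline{\mb{y}}^{t,k}=\frac1n\sum_i\mb{v}_i^{t,k}$ and the averaged inner estimate obeys $\overline{\mb{x}}^{t,k+1}=\overline{\mb{x}}^{t,k}-\alpha\,\overline{\mb{y}}^{t,k}$, i.e. a noisy gradient step on $f$ whose noise is conditionally zero-mean (each $\mb{v}_i$ uses a fresh independent sample). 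I would then carry along the scalars
\begin{align*}
a^{t,k}&:=\mbb{E}\big\|\overline{\mb{x}}^{t,k}-\mb{x}^*\big\|^2,\\
b^{t,k}&:=\mbb{E}\sum_{i}\big\|\mb{x}_i^{t,k}-\overline{\mb{x}}^{t,k}\big\|^2,\\
c^{t,k}&:=\mbb{E}\sum_{i}\big\|\mb{y}_i^{t,k}-\overline{\mb{y}}^{t,k}\big\|^2,
\end{align*}
namely the optimality gap of the mean, the agreement (consensus) error, and the gradient-tracking error, along with an auxiliary bound on the SVRG variance $\mbb{E}\|\mb{v}_i^{t,k+1}-\nabla f_i(\mb{x}_i^{t,k+1})\|^2$.

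The heart of the proof is four one-step inequalities. (i) For $a$: since each $f_{i,j}$ — hence $f$ and each $f_i$ — is $\mu$-strongly convex and $L$-smooth, expanding $\|\overline{\mb{x}}^{t,k+1}-\mb{x}^*\|^2$ relative to an exact gradient step and taking $\alpha$ at most a constant over $L$ gives $a^{t,k+1}\le(1-\mu\alpha)a^{t,k}+\mc{O}\big(\frac{\alpha L}{n}\big)b^{t,k}+\mc{O}\big(\frac{\alpha^2}{n}\big)\cdot(\mathrm{var})$. (ii) For $b$: applying $\|W-\frac1n\mb{1}\mb{1}^\top\|=\sigma$ to the stacked recursion $\mb{x}^{t,k+1}=W\mb{x}^{t,k}-\alpha\mb{y}^{t,k}$ together with Young's inequality yields $b^{t,k+1}\le\frac{1+\sigma^2}{2}b^{t,k}+\mc{O}\big(\frac{\alpha^2}{1-\sigma}\big)c^{t,k}$. (iii) For $c$: the same $\frac{1+\sigma^2}{2}$ contraction plus $\mc{O}\big(\frac{1}{1-\sigma}\big)\mbb{E}\|\mb{v}^{t,k+1}-\mb{v}^{t,k}\|^2$; in the difference $\mb{v}_i^{t,k+1}-\mb{v}_i^{t,k}$ the anchor gradient $\wt{\nabla}_i^t$ cancels, so $L$-smoothness bounds it by $\mc{O}(L^2)\big(\|\mb{x}_i^{t,k+1}-\mb{x}_i^{t,0}\|^2+\|\mb{x}_i^{t,k}-\mb{x}_i^{t,0}\|^2\big)$, hence by $\mc{O}(L^2)$ times the squared distances of the iterates to $\mb{x}^*$ at times $(t,k+1),(t,k),(t,0)$, which re-expresses as $\mc{O}\big(\frac{L^2}{1-\sigma}\big)$ times the $a$'s and $b$'s at those times. (iv) For the variance, the standard SVRG estimate $\mbb{E}\|\mb{v}_i^{t,k+1}-\nabla f_i(\mb{x}_i^{t,k+1})\|^2\le L^2\mbb{E}\|\mb{x}_i^{t,k+1}-\mb{x}_i^{t,0}\|^2\le 2L^2\big(\mbb{E}\|\mb{x}_i^{t,k+1}-\mb{x}^*\|^2+\mbb{E}\|\mb{x}_i^{t,0}-\mb{x}^*\|^2\big)$, split via $\|\mb{x}_i-\mb{x}^*\|^2\le 2\|\overline{\mb{x}}-\mb{x}^*\|^2+2\|\mb{x}_i-\overline{\mb{x}}\|^2$, reduces $(\mathrm{var})$ to $\mc{O}(L^2)\big(n(a^{t,k+1}+a^{t,0})+b^{t,k+1}+b^{t,0}\big)$ — i.e. to the three tracked sequences plus the reference-point terms, which are frozen at $(t,0)$ throughout the inner loop.

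Substituting (iv) into (i)--(iii) and stacking $\mb{u}^{t,k}:=(a^{t,k},b^{t,k},c^{t,k})^\top$ produces a linear recursion $\mb{u}^{t,k+1}\le G(\alpha)\,\mb{u}^{t,k}+H(\alpha)\,(a^{t,0},b^{t,0},0)^\top$ with entrywise-nonnegative $3\times 3$ matrices $G,H$ whose entries I would write out (the diagonal of $G$ being, up to lower-order corrections, $(1-\mu\alpha,\ \frac{1+\sigma^2}{2},\ \frac{1+\sigma^2}{2})$). Unrolling over $k=0,\dots,K-1$ and using the outer link $\mb{u}^{t+1,0}=\mb{u}^{t,K}$ gives $\mb{u}^{t+1,0}\le\Gamma(\alpha,K)\,\mb{u}^{t,0}$ with $\Gamma(\alpha,K)=G(\alpha)^K+\big(\sum_{j=0}^{K-1}G(\alpha)^j\big)H'(\alpha)$ ($H'$ padding $H$ to $3\times3$). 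The crux — and the step I expect to be the main obstacle — is to exhibit $\alpha=\Theta\big(\frac{(1-\sigma)^2}{LQ}\big)$ and $K=\Theta\big(\frac{Q^2}{(1-\sigma)^2}\big)$ with $\rho\big(\Gamma(\alpha,K)\big)\le\frac12$: for this $\alpha$ the $a$-factor is $1-\mu\alpha=1-\Theta\big(\frac{(1-\sigma)^2}{Q^2}\big)$, so $K$ inner steps contract $a$ by a constant, while $\alpha$ being that small keeps the off-diagonal entries of $G$ and the fed-back matrix $\big(\sum_{j<K}G^j\big)H'\preceq(I-G)^{-1}H'$ small enough that the agreement and tracking errors, and the reinjected reference-point error, cannot overwhelm that contraction. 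This amounts to a careful but routine small-gain / spectral-radius estimate on an explicit $3\times3$ matrix (for a nonnegative $\Gamma$, $\rho(\Gamma)<1$ yields geometric decay in a suitable weighted norm); the feature peculiar to SVRG is that $H$ feeds $\mb{x}_i^{t,0}$ back into the \emph{outer} loop, so $\alpha$ must damp that coupling too, which is what pushes $K$ up to $\Theta(Q^2/(1-\sigma)^2)$ rather than the $\Theta(Q)$ of centralized SVRG.

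Finally, the complexity count. Once $\rho(\Gamma)\le\frac12$, after $T=\mc{O}\big(\log\frac1\epsilon\big)$ outer loops we have $\mb{u}^{T,0}\le\epsilon\,\mb{u}^{0,0}$ entrywise, hence $\mbb{E}\|\mb{x}_i^{T,0}-\mb{x}^*\|^2\le 2a^{T,0}+2b^{T,0}=\mc{O}(\epsilon)$, i.e. $\epsilon$-accuracy in distance to the minimizer after rescaling $\epsilon$. Each outer loop costs, at every node $i$: one full local gradient $\wt{\nabla}_i^{t+1}=\nabla f_i(\mb{x}_i^{t+1,0})$, that is $m_i\le M$ component-gradient evaluations; plus two component gradients in each of the $K$ inner iterations ($\nabla f_{i,s}(\mb{x}_i^{t,k+1})$ and $\nabla f_{i,s}(\mb{x}_i^{t,0})$ in the local SVRG update), that is $2K$. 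Thus a single outer loop costs $M+2K=\mc{O}\big(M+\frac{Q^2}{(1-\sigma)^2}\big)$ component gradients per node, and multiplying by the $\mc{O}\big(\log\frac1\epsilon\big)$ outer loops gives the claimed $\mc{O}\big(\big(M+\frac{Q^2}{(1-\sigma)^2}\big)\log\frac1\epsilon\big)$ local component gradient computations.
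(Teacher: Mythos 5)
Your proposal is correct and follows essentially the same route as the paper: the same three tracked quantities (mean optimality gap, consensus error, gradient-tracking error) with the SVRG variance bounded in terms of them plus the frozen reference point, the same $3\times3$ nonnegative linear recursion $\mb{u}^{t,k+1}\le G_\alpha\mb{u}^{t,k}+H_\alpha\mb{u}^{t,0}$ unrolled to $\mb{u}^{t+1,0}\le\big(G_\alpha^K+\sum_{r}G_\alpha^rH_\alpha\big)\mb{u}^{t,0}$, the same choice $\alpha=\Theta\big(\frac{(1-\sigma^2)^2}{QL}\big)$, $K=\Theta\big(\frac{Q^2}{(1-\sigma^2)^2}\big)$ forcing a constant per-outer-loop contraction via a weighted-norm spectral-radius estimate, and the same $m_i+2K$ gradient count per outer loop. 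The only differences are cosmetic (ordering of the state vector, a target contraction of $\tfrac12$ versus the paper's $0.9$, and explicit numerical constants, which the paper works out in Lemmas~\ref{rho}--\ref{rho_G}).
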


\textbf{Comparison with related work:}		
Table~\ref{tab:my-table} summarizes the rate comparison with recent related work, where, for the simplicity of presentation, we assume that all nodes have the same number of local functions, i.e., $M=m=\widetilde{m}$. It can be observed that in large-scale scenarios where~$\widetilde{m}$ is very large, both~\textbf{\texttt{GT-SVRG}} and~\textbf{\texttt{GT-SAGA}} improve upon the convergence rate of these methods in terms of the joint dependence on~$Q$ and~$\widetilde{m}$. We acknowledge that DSBA~\cite{DSBA} and ADFS~\cite{ADFS} achieve better iteration complexity than~\textbf{\texttt{GT-SVRG}} and~\textbf{\texttt{GT-SAGA}}, however, at the expense of computing the proximal mapping of a component function at each iteration.
	Although the computation of this proximal mapping is efficient for certain function classes, it can be very expensive for general functions. Finally, it is worth noting that all existing variance-reduced decentralized stochastic methods~\cite{DSA,DAVRG,edge_DSA,DSBA,ADFS} require symmetric weight matrices and thus undirected networks. In contrast,~\textbf{\texttt{GT-SVRG}} and~\textbf{\texttt{GT-SVRG}} only require doubly-stochastic weights and therefore can be implemented over certain classes of directed graphs that admit doubly-stochastic weights~\cite{weight_balance_digraph}. This provides more flexibility in topology design of the network.
\begin{center}
		\begin{table}[!h]
			\caption{Comparison of several state-of-the-art decentralized optimization methods}
			\centering
			\begin{tabular}{|c|c|}
				\hline
				\textbf{Algorithm} & \textbf{Convergence Rate} \\ \hline
				Gradient Tracking~\cite{harnessing} & $\mc{O}\left(\frac{\widetilde{m}Q^2}{\left(1-\sigma\right)^2}\log\frac{1}{\epsilon}\right)$  \\ \hline
				Gradient Tracking with Nesterov acceleration (see Theorem 3 in~\cite{AGT}) & $\mc{O}\left(\frac{\widetilde{m}Q^{\frac{5}{7}}}{\sigma^{1.5}\left(1-\sigma\right)^{1.5}}\log\frac{1}{\epsilon}\right)$  \\ \hline
				DSA~\cite{DSA} & $\mc{O}\left(\max\left\{\widetilde{m}Q,\frac{Q^4}{1-\sigma},\frac{1}{(1-\sigma)^2}\right\}\log\frac{1}{\epsilon}\right)$\\ \hline
				Edge-based DSA~\cite{edge_DSA} & linear (no explicit rate provided in terms of~$\widetilde{m},Q,\sigma$) \\ \hline
				Diffusion-AVRG~\cite{DAVRG} & linear (no explicit rate provided in terms of~$\widetilde{m},Q,\sigma$) \\ \hline
				\textbf{\texttt{GT-SAGA} (Part I~\cite{GT_SAGA})} &  $\mc{O}\left(\max\left\{\widetilde{m},\frac{Q^2}{\left(1-\sigma\right)^2}\right\}\log\frac{1}{\epsilon}\right)$ \\ \hline
				\textbf{\texttt{GT-SVRG} (this work)} &  $\mc{O}\left(\left(\widetilde{m}+\frac{Q^2}{\left(1-\sigma\right)^2}\right)\log\frac{1}{\epsilon}\right)$ \\ \hline
			\end{tabular}
			\label{tab:my-table}
		\end{table}
	\end{center}

	\textbf{Comparison with~\texttt{GT-SAGA}}: Recall from Part I~\cite{GT_SAGA} of this work that~\textbf{\texttt{GT-SAGA}} achieves~$\epsilon$-accuracy with $$\mc{O}\left(\max\left\{M,\frac{M}{m}\frac{Q^2}{\left(1-\sigma\right)^2}\right\}\log\frac{1}{\epsilon}\right)$$
	local component gradient computations. It can be observed that when data samples are distributed over the network in a highly unbalanced way, i.e.,~$M/m\gg1$,~\textbf{\texttt{GT-SVRG}} achieves better iteration complexity than~\textbf{\texttt{GT-SAGA}}. However, from a practical implementation point of view,
	an unbalanced data distribution may lead to a longer computation time in~\textbf{\texttt{GT-SVRG}}. This is due to the number of local gradient computations required at the end of each inner loop especially for nodes with large number of data samples. Furthermore,~\textbf{\texttt{GT-SVRG}} cannot execute the next inner loop before all nodes finish the local full gradient computation, leading to an overall increase in runtime. Clearly, there is an inherent trade-off between network synchrony and the storage of gradients as far as the relative implementation complexities of \textbf{\texttt{GT-SVRG}} and~\textbf{\texttt{GT-SAGA}} are concerned. If each each node is capable of storing all local component gradients, then~\textbf{\texttt{GT-SAGA}} may be preferred due to flexibility of implementation. On the other hand, for large-scale optimization problems where each node possesses a very large number of data samples, storing all component gradients may be infeasible and~\textbf{\texttt{GT-SVRG}} may be preferred. 
	
	In the next section, we present the convergence analysis of \texttt{\textbf{GT-SVRG}}.

\section{\textbf{\texttt{GT-SVRG}}: Convergence Analysis}
 In the rest of the paper, we assume~$p=1$ for the sake of simplicity. It is straightforward to develop the general case of~$p>1$ with the help of the Kronecker products; see e.g., the procedure in~\cite{AB}. For the purposes of analysis, we now write~\textbf{\texttt{GT-SVRG}} in the following matrix form,~$\forall t\geq0$ and~$0\leq k\leq K-1$, 
	\begin{subequations}\label{WWSVRG}
	\begin{align}
	\mb{x}^{t,k+1} &= W\mb{x}^{t,k} - \alpha\mb{y}^{t,k}, \label{svrg_av}\\
	\mb{y}^{t,k+1} &= W\mb{y}^{t,k} +
	\mb{v}^{t,k+1}-\mb{v}^{t,k}, \label{svrg_bv}
	\end{align}
\end{subequations}
where we use the following notation:
\begin{align*}
\mb{x}^{t,k}\triangleq\left[{\mb{x}^{t,k}_{1}}^\top,\cdots,{\mb{x}^{t,k}_{n}}^\top\right]^\top, \quad
\mb{y}^{t,k}\triangleq\left[{\mb{y}^{t,k}_{1}}^\top,\cdots,{\mb{y}^{t,k}_{n}}^\top\right]^\top, \quad
\mb{v}^{t,k}\triangleq\left[{\mb{v}^{t,k}_{1}}^\top,\cdots,{\mb{v}^{t,k}_{n}}^\top\right]^\top.
\end{align*}
We also define the following quantities:
\begin{align}
\ol{\mb{x}}^{t,k} &\triangleq \frac{1}{n}\mb{1}_n^\top\mb{x}^{t,k},\qquad\ol{\mb{y}}^{t,k} \triangleq \frac{1}{n}\mb{1}_n^\top\mb{y}^{t,k},\qquad \ol{\mb{v}}^{t,k}\triangleq \frac{1}{n}\mb{1}_n^\top\mb{v}^{t,k},
\nonumber
\\\nabla\mb{f}(\mb{x}^{t,k})&\triangleq[\nabla f_1(\mb{x}_1^{t,k})^\top,\dots,\nabla f_n(\mb{x}_n^{t,k})^\top]^\top,\qquad\mb{h}(\mb{x}^{t,k})\triangleq\frac{1}{n}\mb{1}_n^\top\nabla\mb{f}(\mb{x}^{t,k}).
\nonumber
\end{align}

\subsection{Preliminaries}
We denote~$\mc{F}^{t,k}$ as the~$\sigma$-algebra generated by the random variables up to the~$k$th-inner iteration of $t$th-outer loop,~i.e.,~$\{s_{i}^{l,r}\}_{i\in\mc{V}}^{l\leq t, r\leq k-1}.$
It is straightforward to observe that each local SVRG gradient~$\mb{v}_i^{k,t}$ is an unbiased estimator of the full local gradient~$\nabla f_i(\mb{x}_i^{t,k})$ given~$\mc{F}^{t,k}$, i.e.,
$$
\mathbb{E}\left[\mb{v}_i^{t,k}|\mc{F}^{t,k}\right]
= 
\mathbb{E}\left[\nabla f_{i,s_i^{t,k}}(\mb{x}_{i}^{t,k}) - \nabla f_{i,s_i^{t,k}}(\mb{x}_i^{t,0}) + \widetilde{\nabla}_i^t\Big|\mc{F}^{t,k}\right]
= \nabla f_i(\mb{x}_i^{t,k}).
$$
We first note that the average of gradient trackers preserves the average of local SVRG gradients. 
\begin{lem}\label{track}
$\forall t\geq0$ and~$0\leq k\leq K$,~$\ol{\mb{y}}^{t,k} = \ol{\mb{v}}^{t,k}$.
\end{lem}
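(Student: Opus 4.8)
The plan is to prove the identity by a short double induction that mirrors the two-loop structure of Algorithm~\ref{gtsvrg_alg}. The only property of the network needed is that $W$ is doubly-stochastic, which gives $\mb{1}_n^\top W = \mb{1}_n^\top$; everything else is bookkeeping on the averaged quantities $\ol{\mb{y}}^{t,k}$ and $\ol{\mb{v}}^{t,k}$.

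First I would establish the base case at $(t,k)=(0,0)$. By the initialization in Algorithm~\ref{gtsvrg_alg}, $\mb{y}_i^{0,0} = \mb{v}_i^{0,0} = \widetilde{\nabla}_i^0$ for every node $i$, so $\ol{\mb{y}}^{0,0} = \tfrac{1}{n}\mb{1}_n^\top\mb{y}^{0,0} = \tfrac{1}{n}\mb{1}_n^\top\mb{v}^{0,0} = \ol{\mb{v}}^{0,0}$.

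Next comes the inner-loop induction. Fix an outer index $t$ and assume $\ol{\mb{y}}^{t,k}=\ol{\mb{v}}^{t,k}$ for some $k$ with $0\le k\le K-1$. Left-multiplying the gradient-tracker recursion~\eqref{svrg_bv} by $\tfrac{1}{n}\mb{1}_n^\top$ and invoking $\mb{1}_n^\top W = \mb{1}_n^\top$ yields
$$\ol{\mb{y}}^{t,k+1} = \ol{\mb{y}}^{t,k} + \ol{\mb{v}}^{t,k+1} - \ol{\mb{v}}^{t,k} = \ol{\mb{v}}^{t,k+1},$$
where the last equality uses the inductive hypothesis. Hence, once the identity holds at $k=0$ for a given $t$, it holds for all $0\le k\le K$.

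Finally, the outer-loop step closes the induction: the reset $\mb{y}^{t+1,0}=\mb{y}^{t,K}$ and $\mb{v}^{t+1,0}=\mb{v}^{t,K}$ gives $\ol{\mb{y}}^{t+1,0}=\ol{\mb{y}}^{t,K}=\ol{\mb{v}}^{t,K}=\ol{\mb{v}}^{t+1,0}$, so the $k=0$ identity is inherited by outer iteration $t+1$, and the inner-loop argument applies again. There is no genuine obstacle here; the argument is a routine conservation-of-average computation, and the only point requiring a bit of care is keeping the indexing of the outer-loop reset consistent so that the hypothesis at $(t+1,0)$ is correctly carried over from $(t,K)$.
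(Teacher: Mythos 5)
Your proof is correct and follows essentially the same route as the paper's: left-multiply the tracker recursion by $\tfrac{1}{n}\mb{1}_n^\top$ using double stochasticity, anchor the base case at the initialization $\mb{y}^{0,0}=\mb{v}^{0,0}$, and propagate through both loops via the outer-loop reset. The paper telescopes the $t=0$ case and phrases the rest as induction on the outer index, but this is the same conservation-of-average argument with only cosmetic differences in bookkeeping.
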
	
\begin{proof} Multiplying~$\frac{1}{n}\mb{1}_n^\top$ to~\eqref{svrg_bv}, we have:~$\forall t\geq0$ and~$0\leq k\leq K-1$,
\begin{align*}
\ol{\mb{y}}^{t,k+1} = \ol{\mb{y}}^{t,k}
+ \ol{\mb{v}}^{t,k+1} - \ol{\mb{v}}^{t,k}.
\end{align*} 
If~$t=0$, we have that for~$ 0\leq k\leq K-1$,
$$
\ol{\mb{y}}^{0,k+1} = \ol{\mb{y}}^{0,k}
+ \ol{\mb{v}}^{0,k+1} - \ol{\mb{v}}^{0,k}
= \ol{\mb{y}}^{0,k-1}
 - \ol{\mb{v}}^{0,k-1}
+ \ol{\mb{v}}^{0,k+1}
= \ol{\mb{v}}^{0,k+1},
$$
where in the last equality we used the initial condition that~$\mb{y}^{0,0} = \mb{v}^{0,0}$. Therefore,~$\mb{y}^{0,k} = \mb{v}^{0,k}, 0\leq k\leq K$. Now suppose that~$\ol{\mb{y}}^{l,k} = \ol{\mb{v}}^{l,k}$ for some~$l\geq0$ and~$0\leq k\leq K$. We have the following:
\begin{align*}
\ol{\mb{y}}^{l+1,k+1} = \ol{\mb{y}}^{l+1,k}
+ \ol{\mb{v}}^{l+1,k+1} - \ol{\mb{v}}^{l+1,k}
= \ol{\mb{y}}^{l+1,0} - \ol{\mb{v}}^{l+1,0}
+ \ol{\mb{v}}^{l+1,k+1} 
= \ol{\mb{y}}^{l,K} - \ol{\mb{v}}^{l,K}
+ \ol{\mb{v}}^{l+1,k+1} 
= \ol{\mb{v}}^{l+1,k+1}
.
\end{align*} 
Therefore,~$\mb{y}^{l+1,k} = \mb{v}^{l+1,k}, 0\leq k\leq K$. The proof follows by mathematical induction.
\end{proof}
Next, we present some standard lemmas in the context of stochastic gradient tracking methods and SVRG. Their proofs can be found in, for example,~\cite{harnessing,DSGT_Pu,DSGT_Xin,SVRG,DIGing}. Based on Lemma~\ref{track} and the fact that~$\mb{v}_i^{t,k}$ is an unbiased estimator of~$\nabla f_i(\mb{x}_i^{t,k})$, the following lemma is straightforward.
\begin{lem}\label{p3}
	$\mathbb{E}\left[\ol{\mb{y}}^{t,k}|\mc{F}^{t,k}\right] = \mathbb{E}\left[\ol{\mb{v}}^{t,k}|\mc{F}^{t,k}\right] = \mb{h}(\mb{x}^{t,k})$,~$\forall t\geq0$ and~$0\leq k\leq K$.
\end{lem}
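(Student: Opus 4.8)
The plan is to obtain both equalities directly from facts already in hand, with essentially no new work. For the first equality $\mathbb{E}[\ol{\mb{y}}^{t,k}\mid\mc{F}^{t,k}] = \mathbb{E}[\ol{\mb{v}}^{t,k}\mid\mc{F}^{t,k}]$, I would simply quote Lemma~\ref{track}, which gives the pathwise identity $\ol{\mb{y}}^{t,k} = \ol{\mb{v}}^{t,k}$ for every realization and every $t\geq 0$, $0\leq k\leq K$; taking conditional expectation of both sides makes the two quantities coincide trivially (in fact the equality already holds before conditioning).

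For the second equality $\mathbb{E}[\ol{\mb{v}}^{t,k}\mid\mc{F}^{t,k}] = \mb{h}(\mb{x}^{t,k})$, I would expand $\ol{\mb{v}}^{t,k} = \tfrac{1}{n}\mb{1}_n^\top\mb{v}^{t,k} = \tfrac1n\sum_{i=1}^n\mb{v}_i^{t,k}$, push the conditional expectation inside the finite sum by linearity, and invoke the per-node unbiasedness relation recorded just before the lemma, namely $\mathbb{E}[\mb{v}_i^{t,k}\mid\mc{F}^{t,k}] = \nabla f_i(\mb{x}_i^{t,k})$. This yields $\mathbb{E}[\ol{\mb{v}}^{t,k}\mid\mc{F}^{t,k}] = \tfrac1n\sum_{i=1}^n\nabla f_i(\mb{x}_i^{t,k}) = \tfrac1n\mb{1}_n^\top\nabla\mb{f}(\mb{x}^{t,k}) = \mb{h}(\mb{x}^{t,k})$, which is exactly the definition of $\mb{h}$.

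The one technical point worth stating explicitly is that the right-hand side $\mb{h}(\mb{x}^{t,k})$ is $\mc{F}^{t,k}$-measurable, so that the displayed identities are meaningful and the unbiasedness relation applies as used. I would justify this by tracing the recursions~\eqref{svrg_av}--\eqref{svrg_bv}: starting from $\mb{x}^{t,0}$, the iterate $\mb{x}^{t,k}$ is produced by $k$ applications of~\eqref{svrg_av}, each using only $\mb{y}^{t,0},\dots,\mb{y}^{t,k-1}$, which by~\eqref{svrg_bv} depend only on $\mb{v}^{t,0},\dots,\mb{v}^{t,k-1}$ and hence only on the samples drawn strictly before the fresh draw $s_i^{t,k}$ entering $\mb{v}_i^{t,k}$; this is precisely the randomness generating $\mc{F}^{t,k}$, whereas $s_i^{t,k}$ is drawn uniformly and independently of it. Since every step is a direct appeal to Lemma~\ref{track}, to the preceding unbiasedness display, or to routine measurability bookkeeping, I do not anticipate any genuine obstacle — the argument is a two-line computation, which is why it is labeled straightforward.
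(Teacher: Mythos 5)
Your proposal is correct and follows exactly the route the paper intends: it declares the lemma ``straightforward'' from Lemma~\ref{track} together with the per-node unbiasedness display $\mathbb{E}[\mb{v}_i^{t,k}\mid\mc{F}^{t,k}]=\nabla f_i(\mb{x}_i^{t,k})$, which is precisely your two-step argument. The added measurability bookkeeping is a harmless (and welcome) elaboration of what the paper leaves implicit.
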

The difference of~$\mb{h}(\mb{x}^{t,k})$ and~$\nabla f(\ol{\mb{x}}^{t,k})$ is bounded by the consensus error~$\left\|\mb{x}^{t,k}-\mb{1}_n\ol{\mb{x}}^{t,k}\right\|$ as follows. 
\begin{lem}\label{p4}
	$\left\|\mb{h}(\mb{x}^{t,k})-\nabla f(\ol{\mb{x}}^{t,k})\right\|\leq\frac{L}{\sqrt{n}}\left\|\mb{x}^{t,k}-\mb{1}_n\ol{\mb{x}}^{t,k}\right\|$,~$\forall t\geq0$ and~$0\leq k\leq K$.
\end{lem}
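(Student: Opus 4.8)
The plan is to reduce the statement to a per-node use of $L$-smoothness followed by a single Cauchy--Schwarz step; this is the standard estimate relating gradient heterogeneity to the consensus error in the gradient-tracking literature. First I would record the elementary fact that each $f_i=\frac{1}{m_i}\sum_{j=1}^{m_i}f_{i,j}$ is itself $L$-smooth, being an average of the $L$-smooth functions $f_{i,j}$ from Assumption~\ref{smooth} (and similarly $f=\frac1n\sum_{i=1}^n f_i$ is $L$-smooth). Using $\nabla f(\ol{\mb{x}}^{t,k})=\frac1n\sum_{i=1}^n\nabla f_i(\ol{\mb{x}}^{t,k})$ together with the definition $\mb{h}(\mb{x}^{t,k})=\frac1n\sum_{i=1}^n\nabla f_i(\mb{x}_i^{t,k})$, the difference splits as
\begin{align*}
\mb{h}(\mb{x}^{t,k})-\nabla f(\ol{\mb{x}}^{t,k})=\frac1n\sum_{i=1}^n\big(\nabla f_i(\mb{x}_i^{t,k})-\nabla f_i(\ol{\mb{x}}^{t,k})\big).
\end{align*}

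Next I would apply the triangle inequality and then $L$-smoothness of each $f_i$ to get
\begin{align*}
\big\|\mb{h}(\mb{x}^{t,k})-\nabla f(\ol{\mb{x}}^{t,k})\big\|\le\frac1n\sum_{i=1}^n\big\|\nabla f_i(\mb{x}_i^{t,k})-\nabla f_i(\ol{\mb{x}}^{t,k})\big\|\le\frac{L}{n}\sum_{i=1}^n\big\|\mb{x}_i^{t,k}-\ol{\mb{x}}^{t,k}\big\|,
\end{align*}
and then bound the sum over nodes by Cauchy--Schwarz: $\sum_{i=1}^n\|\mb{x}_i^{t,k}-\ol{\mb{x}}^{t,k}\|\le\sqrt n\big(\sum_{i=1}^n\|\mb{x}_i^{t,k}-\ol{\mb{x}}^{t,k}\|^2\big)^{1/2}=\sqrt n\,\|\mb{x}^{t,k}-\mb{1}_n\ol{\mb{x}}^{t,k}\|$, where the last equality is just the definition of the stacked vector (recall $p=1$, so $\ol{\mb{x}}^{t,k}$ is a scalar and $\|\cdot\|$ on $\R^n$ is Euclidean). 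Chaining the two displays yields exactly $\|\mb{h}(\mb{x}^{t,k})-\nabla f(\ol{\mb{x}}^{t,k})\|\le\frac{L}{\sqrt n}\|\mb{x}^{t,k}-\mb{1}_n\ol{\mb{x}}^{t,k}\|$.

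There is no genuine obstacle here. The only two points worth flagging are that the smoothness hypothesis, although stated at the component level $f_{i,j}$, passes to $f_i$ (and to $f$) by averaging, and that the factor $1/\sqrt n$ rather than $1$ arises precisely from trading the $\ell_1$-sum of the per-node errors for the $\ell_2$ consensus-error norm in the Cauchy--Schwarz step.
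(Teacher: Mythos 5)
Your proof is correct and is exactly the standard argument: the paper does not prove this lemma itself but defers to the references (e.g., \cite{harnessing,DSGT_Pu,DSGT_Xin}), where the same decomposition into per-node gradient differences, $L$-smoothness of each $f_i$ (inherited by averaging the $L$-smooth components $f_{i,j}$), and the Cauchy--Schwarz step producing the $\sqrt{n}$ factor are used. Nothing is missing.
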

The weight matrix~$W$ is a contraction operator.
\begin{lem}
	$\forall \mb{x}\in\mathbb{R}^n, \left\|W\mb{x} - W_\infty\mb{x}\right\|\leq\sigma\left\|\mb{x} - W_\infty\mb{x}\right\|$, where~$W_\infty=\frac{\mb{1}_n\mb{1}_n^\top}{n}$.
\end{lem}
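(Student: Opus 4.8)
The plan is to reduce the inequality to a spectral‑norm bound for $W-W_\infty$ acting on the subspace orthogonal to $\mb{1}_n$, exploiting that for a doubly‑stochastic $W$ the normalized all‑ones vector is exactly the leading left and right singular vector of $W$.

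First I would record two elementary consequences of Assumption~\ref{connect}. Since $W$ has nonnegative entries with unit row sums, Jensen's inequality gives $\|W\mb{x}\|^2=\sum_i(\sum_r w_{ir}x_r)^2\le\sum_i\sum_r w_{ir}x_r^2=\sum_r x_r^2=\|\mb{x}\|^2$ (using unit column sums in the last step), so the largest singular value of $W$ is at most $1$; and double‑stochasticity gives $W\mb{1}_n=\mb{1}_n$ and $\mb{1}_n^\top W=\mb{1}_n^\top$. Hence $\frac{1}{\sqrt{n}}\mb{1}_n$ is a unit‑norm vector that is simultaneously a right and a left singular vector of $W$ with singular value $1$, and by the bound just shown this is in fact the largest singular value, $\sigma_1=1$.

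Next I would fix a singular value decomposition $W=\sum_{i=1}^n\sigma_i\mb{u}_i\mb{v}_i^\top$ whose leading pair is $\mb{u}_1=\mb{v}_1=\frac{1}{\sqrt{n}}\mb{1}_n$ (possible by the previous step), with $1=\sigma_1\ge\sigma_2=\sigma\ge\cdots\ge\sigma_n\ge0$ and $\{\mb{u}_i\}_{i=1}^n,\{\mb{v}_i\}_{i=1}^n$ orthonormal bases of $\R^n$. Then $W_\infty=\frac{\mb{1}_n\mb{1}_n^\top}{n}=\mb{u}_1\mb{v}_1^\top$, so $W-W_\infty=\sum_{i=2}^n\sigma_i\mb{u}_i\mb{v}_i^\top$. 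For an arbitrary $\mb{x}\in\R^n$, expanding and using orthonormality of $\{\mb{u}_i\}$ yields $\|W\mb{x}-W_\infty\mb{x}\|^2=\sum_{i=2}^n\sigma_i^2(\mb{v}_i^\top\mb{x})^2\le\sigma^2\sum_{i=2}^n(\mb{v}_i^\top\mb{x})^2$. Finally, since $\{\mb{v}_i\}$ is an orthonormal basis and $W_\infty$ is the orthogonal projector onto $\mathrm{span}(\mb{1}_n)$, Parseval's identity together with orthogonality of $W_\infty\mb{x}$ and $\mb{x}-W_\infty\mb{x}$ gives $\sum_{i=2}^n(\mb{v}_i^\top\mb{x})^2=\|\mb{x}\|^2-(\mb{v}_1^\top\mb{x})^2=\|\mb{x}\|^2-\|W_\infty\mb{x}\|^2=\|\mb{x}-W_\infty\mb{x}\|^2$; substituting back gives the claimed $\|W\mb{x}-W_\infty\mb{x}\|\le\sigma\|\mb{x}-W_\infty\mb{x}\|$.

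There is essentially no serious obstacle; the only point deserving care is the assertion that one may take the leading singular pair of $W$ to be $(\frac{1}{\sqrt{n}}\mb{1}_n,\frac{1}{\sqrt{n}}\mb{1}_n)$ even if the singular value $1$ is repeated. This is legitimate because $\frac{1}{\sqrt{n}}\mb{1}_n$ is a genuine eigenvector of both $W^\top W$ and $WW^\top$ with eigenvalue $\sigma_1^2=1$, hence can be completed to the required orthonormal bases. Equivalently one could bypass the SVD entirely: observe $(W-W_\infty)\mb{1}_n=\mb{0}$ and, for general $\mb{x}$, $(W-W_\infty)\mb{x}=W(\mb{x}-W_\infty\mb{x})$ with $\mb{x}-W_\infty\mb{x}\perp\mb{1}_n$, then invoke the Courant--Fischer characterization, by which the Rayleigh quotient of $W^\top W$ restricted to $\mb{1}_n^\perp$ is at most its second eigenvalue $\sigma^2$. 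Note that the primitivity part of Assumption~\ref{connect} is not needed for this lemma; it enters only later, to guarantee the strict inequality $\sigma<1$.
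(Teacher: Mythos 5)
Your argument is correct, and it is essentially the standard proof that the paper itself omits (this lemma is listed among the ``standard lemmas'' whose proofs are deferred to the cited references): identify $\tfrac{1}{\sqrt{n}}\mb{1}_n$ as the leading left and right singular vector of the doubly-stochastic $W$, note $W_\infty$ is the corresponding rank-one term of the SVD, and bound the action of $W-W_\infty$ on $\mb{1}_n^\perp$ by the second singular value $\sigma$. You also correctly flag the one delicate point (choosing the leading singular pair when $1$ may be a repeated singular value) and correctly observe that primitivity is not needed for the inequality itself, only later to ensure $\sigma<1$.
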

Descending along the direction of full gradient leads to a contraction in the optimality gap~\cite{nesterov_book}.
\begin{lem}
	Let~$f$ be~$\mu$-strongly-convex and~$L$-smooth. If~$0<\alpha\leq\frac{1}{L}$, the following holds, for~$\forall\mb{x}\in\mathbb{R}^p$,
	\begin{align*}
	\left\|\mb{x}-\alpha\nabla f(\mb{x}) -\mb{x}^*\right\|
	\leq(1-\mu\alpha)\left\|\mb{x}-\mb{x}^*\right\|
	\end{align*}
\end{lem}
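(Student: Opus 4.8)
The plan is to recognize the map $\mb{x} \mapsto \mb{x} - \alpha\nabla f(\mb{x})$ as a contraction toward its own fixed point $\mb{x}^*$. Since $\mb{x}^*$ minimizes the strongly convex $f$, we have $\nabla f(\mb{x}^*) = \mb{0}$, so $\mb{x}^* - \alpha\nabla f(\mb{x}^*) = \mb{x}^*$ and the quantity to be bounded is precisely the displacement of this map between $\mb{x}$ and $\mb{x}^*$. Writing $\mb{d} \triangleq \mb{x} - \mb{x}^*$ and $\mb{g} \triangleq \nabla f(\mb{x}) - \nabla f(\mb{x}^*) = \nabla f(\mb{x})$, I would square the target and expand $\|\mb{d} - \alpha\mb{g}\|^2 = \|\mb{d}\|^2 - 2\alpha\langle \mb{d}, \mb{g}\rangle + \alpha^2\|\mb{g}\|^2$, reducing the statement to showing that this is at most $(1-\mu\alpha)^2\|\mb{d}\|^2$, after which taking square roots finishes.

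The key ingredient is the sharp monotonicity/co-coercivity bound for a function that is simultaneously $\mu$-strongly convex and $L$-smooth (Nesterov, Theorem~2.1.12 in~\cite{nesterov_book}): $\langle \mb{g}, \mb{d}\rangle \geq \tfrac{\mu L}{\mu+L}\|\mb{d}\|^2 + \tfrac{1}{\mu+L}\|\mb{g}\|^2$. Substituting this into the cross term yields $\|\mb{d}-\alpha\mb{g}\|^2 \le \big(1 - \tfrac{2\alpha\mu L}{\mu+L}\big)\|\mb{d}\|^2 + \alpha\big(\alpha - \tfrac{2}{\mu+L}\big)\|\mb{g}\|^2$. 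Because $\alpha \le \tfrac1L \le \tfrac{2}{\mu+L}$ (the last step using $\mu\le L$), the coefficient multiplying $\|\mb{g}\|^2$ is nonpositive, so I may replace $\|\mb{g}\|^2$ by its lower bound $\mu^2\|\mb{d}\|^2$ — obtained from strong monotonicity $\langle \mb{g},\mb{d}\rangle \ge \mu\|\mb{d}\|^2$ together with Cauchy--Schwarz — which only increases the right-hand side. Collecting the two $\|\mb{d}\|^2$ coefficients then simplifies exactly to $(1-\mu\alpha)^2$.

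I expect the main obstacle to be invoking the correct inequality rather than any delicate computation. The naive route, plugging the two separate bounds $\langle \mb{g},\mb{d}\rangle \ge \mu\|\mb{d}\|^2$ and $\|\mb{g}\| \le L\|\mb{d}\|$ directly, produces the factor $1 - 2\alpha\mu + \alpha^2 L^2$, which is strictly larger than $(1-\mu\alpha)^2$ whenever $L>\mu$ and hence fails; a mere convex combination of the strong-convexity and co-coercivity bounds similarly only delivers $1 - 2\alpha\mu + \alpha^2\mu L$, again too weak. It is precisely the tight interpolation inequality above — saturated by quadratics, for which $(1-\mu\alpha)$ is exactly the operator norm of $I - \alpha\nabla^2 f$ on the spectral interval $[\mu,L]$ — that yields the exact constant. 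A cleaner but less general alternative, worth recording, assumes $f$ twice differentiable and writes $\mb{g} = \big(\int_0^1 \nabla^2 f(\mb{x}^* + s\mb{d})\,ds\big)\mb{d}$; the averaged Hessian $H$ satisfies $\mu I \preceq H \preceq L I$, so $\|\mb{d}-\alpha\mb{g}\| = \|(I-\alpha H)\mb{d}\| \le \max_{\lambda\in[\mu,L]}|1-\alpha\lambda|\,\|\mb{d}\| = (1-\mu\alpha)\|\mb{d}\|$ for $\alpha\le 1/L$, giving the result immediately.
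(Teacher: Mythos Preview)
Your proof is correct. The paper itself does not supply a proof of this lemma; it is listed among the ``standard lemmas'' whose proofs are referred to~\cite{harnessing,DSGT_Pu,DSGT_Xin,SVRG,DIGing} and, for this particular contraction, specifically to Nesterov's book~\cite{nesterov_book}. Your argument via the sharp interpolation inequality $\langle \mb{g},\mb{d}\rangle \ge \tfrac{\mu L}{\mu+L}\|\mb{d}\|^2 + \tfrac{1}{\mu+L}\|\mb{g}\|^2$ is exactly the standard route one finds in that reference, and your algebraic check that the two $\|\mb{d}\|^2$ coefficients collapse to $(1-\mu\alpha)^2$ is clean and correct. Your diagnosis of why the naive bounds $\langle \mb{g},\mb{d}\rangle \ge \mu\|\mb{d}\|^2$ and $\|\mb{g}\|\le L\|\mb{d}\|$ alone are insufficient is also on point.
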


\subsection{Auxiliary Results}
In order to develop the results, we first consider the progress made by one inner-loop iteration of~\textbf{\texttt{GT-SVRG}}. First, following~\cite{DSGT_Pu,DSGT_Xin}, we derive a contraction + perturbation bound for~$\mathbb{E}\left[\left\|\mb{x}^{t,k}-\mb{1}_n\ol{\mb{x}}^{t,k}\right\|^2\right]$.  
\begin{lem}\label{consensus_er_svrg}
The following holds:~$\forall t\geq0$ and~$0\leq k\leq K-1$,
	$$\mathbb{E}\left[\left\|\mb{x}^{t,k+1}-\mb{1}_n\ol{\mb{x}}^{t,k+1}\right\|^2\right]\leq
	\frac{1+\sigma^2}{2}\mathbb{E}\left[\left\|\mb{x}^{t,k}-\mb{1}_n\ol{\mb{x}}^{t,k}\right\|^2\right] + \frac{2\alpha^2}{1-\sigma^2}\mathbb{E}\left[\left\|\mb{y}^{t,k}-\mb{1}_n\ol{\mb{y}}^{t,k}\right\|^2\right].$$
\end{lem}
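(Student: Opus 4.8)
The plan is to reduce the stochastic statement to a deterministic (pathwise) recursion on the consensus vector and then combine Young's inequality with the contraction property of $W$. First, I would use the inner-estimate update~\eqref{svrg_av} together with the double stochasticity of $W$ (Assumption~\ref{connect}) to express the mean iterate as
$$\mb{1}_n\ol{\mb{x}}^{t,k+1} = W_\infty\mb{x}^{t,k+1} = W_\infty\big(W\mb{x}^{t,k}-\alpha\mb{y}^{t,k}\big) = W_\infty\mb{x}^{t,k} - \alpha W_\infty\mb{y}^{t,k},$$
where I used $W_\infty W = W_\infty$. Subtracting this from~\eqref{svrg_av} gives
$$\mb{x}^{t,k+1}-\mb{1}_n\ol{\mb{x}}^{t,k+1} = (W-W_\infty)\mb{x}^{t,k} - \alpha(I-W_\infty)\mb{y}^{t,k}.$$
Since $(W-W_\infty)\mb{1}_n = \mb{0}$ and $(I-W_\infty)\mb{1}_n = \mb{0}$, I may replace $\mb{x}^{t,k}$ by $\mb{x}^{t,k}-\mb{1}_n\ol{\mb{x}}^{t,k}$ and $\mb{y}^{t,k}$ by $\mb{y}^{t,k}-\mb{1}_n\ol{\mb{y}}^{t,k}$ on the right-hand side, obtaining a clean recursion purely in the two consensus errors.

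Next, I would take squared norms and apply Young's inequality $\|\mb{a}+\mb{b}\|^2\leq(1+\beta)\|\mb{a}\|^2+(1+\beta^{-1})\|\mb{b}\|^2$ for a free parameter $\beta>0$, together with the contraction lemma ($\|W\mb{z}-W_\infty\mb{z}\|\leq\sigma\|\mb{z}-W_\infty\mb{z}\|$) applied to $\mb{z}=\mb{x}^{t,k}-\mb{1}_n\ol{\mb{x}}^{t,k}$; since $W_\infty\mb{z}=\mb{0}$ for this vector, this yields $\|(W-W_\infty)(\mb{x}^{t,k}-\mb{1}_n\ol{\mb{x}}^{t,k})\|\leq\sigma\|\mb{x}^{t,k}-\mb{1}_n\ol{\mb{x}}^{t,k}\|$. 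Hence
$$\left\|\mb{x}^{t,k+1}-\mb{1}_n\ol{\mb{x}}^{t,k+1}\right\|^2\leq(1+\beta)\sigma^2\left\|\mb{x}^{t,k}-\mb{1}_n\ol{\mb{x}}^{t,k}\right\|^2+(1+\beta^{-1})\alpha^2\left\|\mb{y}^{t,k}-\mb{1}_n\ol{\mb{y}}^{t,k}\right\|^2.$$
Choosing $\beta=\frac{1-\sigma^2}{2\sigma^2}$ makes the first coefficient exactly $\frac{1+\sigma^2}{2}$ and the second equal to $\frac{1+\sigma^2}{1-\sigma^2}\leq\frac{2}{1-\sigma^2}$. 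Taking expectations of both sides (the inequality holds pathwise, so only monotonicity of expectation is needed) gives the claim.

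The only mild obstacle is the coefficient bookkeeping: one must check that the optimal split of the cross term indeed produces the contraction factor $\frac{1+\sigma^2}{2}$ while keeping the perturbation coefficient no larger than $\frac{2}{1-\sigma^2}$. Everything else follows directly from double stochasticity, the identities $W_\infty W=W_\infty$ and $(W-W_\infty)\mb{1}_n=\mb{0}$, and the contraction lemma; no probabilistic argument beyond linearity/monotonicity of expectation is involved, since the $\mb{x}$-update is affine in $\mb{y}^{t,k}$ and does not itself introduce new randomness at step $k$.
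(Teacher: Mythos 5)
Your proposal is correct and follows essentially the same route as the paper: both reduce to the recursion $\mb{x}^{t,k+1}-\mb{1}_n\ol{\mb{x}}^{t,k+1}=(W-W_\infty)\mb{x}^{t,k}-\alpha(I-W_\infty)\mb{y}^{t,k}$, apply the contraction property of $W$, and split the cross term with Young's inequality (your parameter $\beta=\tfrac{1-\sigma^2}{2\sigma^2}$ is exactly the split the paper performs by hand), arriving at the same coefficients $\tfrac{1+\sigma^2}{2}$ and $\tfrac{1+\sigma^2}{1-\sigma^2}\leq\tfrac{2}{1-\sigma^2}$ before taking total expectation.
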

	\begin{proof}
	Following from~\eqref{svrg_av}, we have
	\begin{align*}
	&\left\|\mb{x}^{t,k+1}-\mb{1}_n\ol{\mb{x}}^{t,k+1}\right\|^2 = \left\|W\mb{x}^{t,k} - \alpha\mb{y}^{t,k}-W_\infty\left(W\mb{x}^{t,k} - \alpha\mb{y}^{t,k}\right)\right\|^2 \\
	=& \left\|W\mb{x}^{t,k} - W_\infty\mb{x}^{t,k}\right\|^2 + \alpha^2\left\|\mb{y}^{t,k} - W_\infty\mb{y}^{t,k}\right\|^2
	- 2\alpha\Big\langle W\mb{x}^{t,k} -W_\infty\mb{x}^{t,k}, \mb{y}^{t,k} -W_\infty\mb{y}^{t,k} \Big\rangle \\
	=&~\sigma^2\left\|\mb{x}^{t,k} - W_\infty\mb{x}^{t,k}\right\|^2 + \alpha^2\left\|\mb{y}^{t,k} - W_\infty\mb{y}^{t,k}\right\|^2
	+2\sigma\left\|\mb{x}^{t,k} - W_\infty\mb{x}^{t,k}\right\|\alpha\left\|\mb{y}^{t,k} - W_\infty\mb{y}^{t,k}\right\| \\
	\leq&~\sigma^2\left\|\mb{x}^{t,k} - W_\infty\mb{x}^{t,k}\right\|^2 + \alpha^2\left\|\mb{y}^{t,k} - W_\infty\mb{y}^{t,k}\right\|^2 \nonumber\\
	&+ \sigma\left(\frac{1-\sigma^2}{2\sigma}\left\|\mb{x}^{t,k} - W_\infty\mb{x}^{t,k}\right\|^2
	+ \frac{2\sigma}{1-\sigma^2}\alpha^2\left\|\mb{y}^{t,k} - W_\infty\mb{y}^{t,k}\right\|^2\right),\\
	=&~\frac{1+\sigma^2}{2}\left\|\mb{x}^{t,k} - W_\infty\mb{x}^{t,k}\right\|^2+\alpha^2\left(1+\frac{2\sigma^2}{1-\sigma^2}\right)\left\|\mb{y}^{t,k} - W_\infty\mb{y}^{t,k}\right\|^2
	\end{align*}
	and the proof follows from~$1+\sigma^2<2$ and taking the total expectation.
\end{proof}
Next, we bound the optimality gap~$\mathbb{E}\left[\left\|\ol{\mb x}^{t,k}-\mb{x}^*\right\|^2 | \mc{F}^{t,k}\right]$, following the procedure in~\cite{DSGT_Pu,DSGT_Xin}. 
\begin{lem}\label{opt_exp_svrg}
The following holds:~$\forall t\geq0$ and~$0\leq k\leq K-1$,
	\begin{align*}
	\mathbb{E}\left[\left\|\ol{\mb x}^{t,k+1}-\mb{x}^*\right\|^2 | \mc{F}^{t,k}\right] 
	=& \left\|\ol{\mb x}^{t,k}-\alpha\nabla f(\ol{\mb x}^{t,k})-\mb{x}^* \right\|^2 + 2\alpha\Big\langle \ol{\mb x}^{t,k}-\alpha\nabla f(\ol{\mb x}^{t,k})-\mb{x}^*, \nabla f(\ol{\mb x}^{t,k})-\mb{h}(\mb{x}^{t,k})\Big\rangle
	\nonumber\\
	&+ \alpha^2\left\|\nabla f(\ol{\mb x}^{t,k})
	-\mb{h}(\mb{x}^{t,k})\right\|^2
	+ \frac{\alpha^2}{n^2}\mathbb{E}\left[\left\|
	\mb{v}^{t,k}-\nabla\mb{f}(\mb{x}^{t,k})\right\|^2 
	\Big| \mc{F}^{t,k}\right]
	.
	\end{align*}
\end{lem}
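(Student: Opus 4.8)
The plan is to reduce everything to the averaged dynamics and then expand a single square. Multiplying~\eqref{svrg_av} by~$\frac1n\mb{1}_n^\top$ and using that~$W$ is doubly-stochastic, so~$\frac1n\mb{1}_n^\top W=\frac1n\mb{1}_n^\top$, gives~$\ol{\mb{x}}^{t,k+1}=\ol{\mb{x}}^{t,k}-\alpha\ol{\mb{y}}^{t,k}$. Subtracting~$\mb{x}^*$ and inserting~$\pm\,\alpha\nabla f(\ol{\mb{x}}^{t,k})$, write~$\ol{\mb{x}}^{t,k+1}-\mb{x}^* = \mb{a}+\alpha\mb{b}$ with~$\mb{a}\triangleq\ol{\mb{x}}^{t,k}-\alpha\nabla f(\ol{\mb{x}}^{t,k})-\mb{x}^*$ and~$\mb{b}\triangleq\nabla f(\ol{\mb{x}}^{t,k})-\ol{\mb{y}}^{t,k}$, where~$\mb{a}$ is~$\mc{F}^{t,k}$-measurable. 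Then~$\|\ol{\mb{x}}^{t,k+1}-\mb{x}^*\|^2=\|\mb{a}\|^2+2\alpha\langle\mb{a},\mb{b}\rangle+\alpha^2\|\mb{b}\|^2$, and I would take~$\mathbb{E}[\,\cdot\,|\mc{F}^{t,k}]$ term by term.

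The constant term~$\|\mb{a}\|^2$ is already the first term in the claim. For the cross term, Lemma~\ref{p3} gives~$\mathbb{E}[\ol{\mb{y}}^{t,k}|\mc{F}^{t,k}]=\mb{h}(\mb{x}^{t,k})$, hence~$\mathbb{E}[\langle\mb{a},\mb{b}\rangle|\mc{F}^{t,k}]=\langle\mb{a},\nabla f(\ol{\mb{x}}^{t,k})-\mb{h}(\mb{x}^{t,k})\rangle$, which is precisely the displayed cross term. For~$\|\mb{b}\|^2$, decompose~$\mb{b}=\big(\nabla f(\ol{\mb{x}}^{t,k})-\mb{h}(\mb{x}^{t,k})\big)+\big(\mb{h}(\mb{x}^{t,k})-\ol{\mb{y}}^{t,k}\big)$: the first piece is~$\mc{F}^{t,k}$-measurable and the second has conditional mean zero by Lemma~\ref{p3}, so the resulting inner-product term vanishes in conditional expectation and
$$\mathbb{E}\left[\|\mb{b}\|^2 \,\big|\, \mc{F}^{t,k}\right] = \left\|\nabla f(\ol{\mb{x}}^{t,k})-\mb{h}(\mb{x}^{t,k})\right\|^2 + \mathbb{E}\left[\left\|\mb{h}(\mb{x}^{t,k})-\ol{\mb{y}}^{t,k}\right\|^2 \,\big|\, \mc{F}^{t,k}\right].$$

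It remains to identify the last conditional variance. By Lemma~\ref{track},~$\ol{\mb{y}}^{t,k}=\ol{\mb{v}}^{t,k}$, so~$\mb{h}(\mb{x}^{t,k})-\ol{\mb{y}}^{t,k}=\tfrac1n\mb{1}_n^\top\big(\nabla\mb{f}(\mb{x}^{t,k})-\mb{v}^{t,k}\big)$. Expanding the norm of this average yields the diagonal sum~$\sum_i\mathbb{E}\big[(\nabla f_i(\mb{x}_i^{t,k})-\mb{v}_i^{t,k})^2|\mc{F}^{t,k}\big]$ together with off-diagonal cross terms, and this is the one place that requires a genuine (if short) argument: the samples~$s_i^{t,k}$ are drawn independently across nodes~$i$ and each local SVRG gradient is conditionally unbiased, so for~$i\neq j$ the conditional expectation of the cross term factorizes into a product of two zero-mean quantities and vanishes. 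Only the diagonal survives, which equals~$\frac{1}{n^2}\mathbb{E}\big[\|\mb{v}^{t,k}-\nabla\mb{f}(\mb{x}^{t,k})\|^2\,\big|\,\mc{F}^{t,k}\big]$. Substituting the three evaluated pieces back into the expanded square gives the stated identity. The only real obstacle is this conditional-independence bookkeeping across nodes; the rest is exact algebra, which is why the result is an equality rather than a bound.
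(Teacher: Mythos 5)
Your proposal is correct and follows essentially the same route as the paper's proof: average the update via $\frac1n\mb{1}_n^\top$, expand the square around $\ol{\mb{x}}^{t,k}-\alpha\nabla f(\ol{\mb{x}}^{t,k})-\mb{x}^*$, kill the cross terms using Lemma~\ref{p3}, and reduce the conditional variance of $\ol{\mb{y}}^{t,k}$ to the diagonal sum via conditional independence of the local samples across nodes. No substantive differences to note.
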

\begin{proof}
Multiplying~$\frac{1}{n}\mb{1}_n^\top$ to~\eqref{svrg_av} obtains~$\ol{\mb{x}}^{t,k+1}=\ol{\mb{x}}^{t,k}-\alpha\ol{\mb{y}}^{t,k},\forall t\geq 0~\text{and}~0\leq k \leq K-1.$ Then we have,
	\begin{align*}
	&\left\|\ol{\mb x}^{t,k+1}-\mb{x}^*\right\|^2 = \left\|\ol{\mb x}^{t,k}-\alpha\ol{\mb y}^{t,k}-\mb{x}^*\right\|^2\\
	=& \left\|\ol{\mb x}^{t,k}-\alpha\nabla f(\ol{\mb x}^{t,k})-\mb{x}^*+\alpha\left(\nabla f(\ol{\mb x}^{t,k})-\ol{\mb y}^{t,k}\right)\right\|^2 \\
	=& \left\|\ol{\mb x}^{t,k}-\alpha\nabla f(\ol{\mb x}^{t,k})-\mb{x}^* \right\|^2 + 2\alpha\Big\langle \ol{\mb x}^{t,k}-\alpha\nabla f(\ol{\mb x}^{t,k})-\mb{x}^*, \nabla f(\ol{\mb x}^{t,k})-\ol{\mb y}^{t,k}\Big\rangle + \alpha^2 \left\|\nabla f(\ol{\mb x}^{t,k})-\ol{\mb y}^{t,k}\right\|^2. 
	\end{align*}
	Recall that~$\mathbb{E}\left[\ol{\mb{y}}^{t,k}|\mc{F}^{t,k}\right] = \mb{h}(\mb{x}^{t,k})$ from Lemma~\ref{p3}. We take the expectation from bothsides given~$\mc{F}^{t,k}$ to obtain:
	\begin{align}\label{d1}
	\mathbb{E}\left[\left\|\ol{\mb x}^{t,k+1}-\mb{x}^*\right\|^2 | \mc{F}^{t,k}\right] 
	=& \left\|\ol{\mb x}^{t,k}-\alpha\nabla f(\ol{\mb x}^{t,k})-\mb{x}^* \right\|^2 + 2\alpha\Big\langle \ol{\mb x}^{t,k}-\alpha\nabla f(\ol{\mb x}^{t,k})-\mb{x}^*, \nabla f(\ol{\mb x}^{t,k})-\mb{h}(\mb{x}^{t,k})\Big\rangle
	\nonumber\\
	&+ \alpha^2 \mathbb{E}\left[\left\|\nabla f(\ol{\mb x}^{t,k})-\ol{\mb y}^{t,k}\right\|^2 | \mc{F}^{t,k}\right].
	\end{align}
	We split the last term above~$\left\|\nabla f(\ol{\mb x}^{t,k})-\ol{\mb y}^{t,k}\right\|^2$ as consensus error + variance as follows. 
	\begin{align}\label{e1}
	&\mathbb{E}\left[\left\|\nabla f(\ol{\mb x}^{t,k})-\ol{\mb y}^{t,k}\right\|^2 | \mc{F}^{k}\right]\nonumber\\
	=& \mathbb{E}\left[\left\|\nabla f(\ol{\mb x}^{t,k})
	-\mb{h}(\mb{x}^{t,k}) + \mb{h}(\mb{x}^{t,k})
	-\ol{\mb y}^{t,k}\right\|^2 | \mc{F}^{t,k}\right] \nonumber\\
	=& \underbrace{\left\|\nabla f(\ol{\mb x}^{t,k})
		-\mb{h}(\mb{x}^{t,k})\right\|^2 }_{\text{consensus error}} 
	+ \underbrace{\mathbb{E}\left[\left\|\mb{h}(\mb{x}^{t,k})
		-\ol{\mb y}^{t,k}\right\|^2 \big| \mc{F}^{t,k}\right]}_{\text{variance}}
	+ \underbrace{2\Big\langle \nabla f(\ol{\mb x}^{t,k})
		-\mb{h}(\mb{x}^{t,k}), \mathbb{E}\left[\mb{h}(\mb{x}^{t,k})
		-\ol{\mb y}^{t,k}\big| \mc{F}^{t,k} \right]\Big\rangle}_{=0} 
	\end{align}
	The variance term can be simplified as follows:
	\begin{align}\label{v1}
	&~\mathbb{E}\left[\left\|\mb{h}(\mb{x}^{t,k})
	-\ol{\mb y}^{t,k}\right\|^2 \Big| \mc{F}^{t,k}\right] \nonumber\\
	=&~\mathbb{E}\left[\left\|
	\frac{1}{n}\sum_{i=1}^{n}\left( \nabla f_i(\mb{x}_i^{t,k})-\mb{v}_i^{t,k}\right)
	\right\|^2 \Bigg| \mc{F}^{t,k}\right] 
	= \frac{1}{n^2}\mathbb{E}\left[\left\|
	\sum_{i=1}^{n}\left(\nabla f_i(\mb{x}_i^{t,k})-\mb{v}_i^{t,k}\right)
	\right\|^2 \Bigg| \mc{F}^{t,k}\right] \nonumber\\
	=&~\frac{1}{n^2}\mathbb{E}\left[\sum_{i=1}^{n}\left\|
	\nabla f_i(\mb{x}_i^{t,k})-\mb{v}_i^{t,k}\right\|^2 
	+\sum_{i\neq j}\Big\langle \nabla f_i(\mb{x}_i^{t,k})-\mb{v}_i^{t,k}, \nabla f_j(\mb{x}_j^k)-\mb{v}_j^k\Big\rangle 
	\Bigg| \mc{F}^{t,k}\right] \nonumber\\
	=&~\frac{1}{n^2}\sum_{i=1}^{n}\mathbb{E}\left[\left\|
	\nabla f_i(\mb{x}_i^{t,k})-\mb{v}_i^{t,k}\right\|^2 
	\Big| \mc{F}^{t,k}\right]
	= \frac{1}{n^2}\mathbb{E}\left[\left\|
	\nabla\mb{f}(\mb{x}^{t,k})-\mb{v}^{t,k}\right\|^2 
	\Big| \mc{F}^{t,k}\right]
	,	
	\end{align}                                                   
	where the second last equality uses the fact that~$\{\mb{v}_i^k\}$ are independent with each other given~$\mc{F}^{t,k}$. The proof follows from using~\eqref{e1} and~\eqref{v1} in~\eqref{d1}.
\end{proof}

Next, we bound the gradient variance term~$\mathbb{E}\left[\left\| \mb{v}^{t,k}-\nabla\mb{f}(\mb{x}^{t,k})\right\|^2 
\Big| \mc{F}^{t,k}\right]$, following a similar procedure in~\cite{SVRG}.
\begin{lem}\label{var_svrg}
The following holds:~$\forall t\geq0$ and~$0\leq k\leq K$,
\begin{align*}
&\mathbb{E}\left[\left\| \mb{v}^{t,k}-\nabla \mb{f}(\mb{x}^{t,k})\right\|^2 
\Big| \mc{F}^{t,k}\right] \nonumber\\
&\leq
4L^2\left\|\mb{x}^{t,k}-\mb{1}_n\ol{\mb{x}}^{t,k}\right\|^2
+ 4nL^2\left\|\ol{\mb{x}}^{t,k}-\mb{x}^*\right\|^2
+ 4L^2\left\|\mb{x}^{t,0}-\mb{1}_n\ol{\mb{x}}^{t,0}\right\|^2
+ 4nL^2\left\|\ol{\mb{x}}^{t,0}-\mb{x}^*\right\|^2,
\end{align*}
\end{lem}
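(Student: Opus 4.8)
The plan is to reduce the claim to the standard centralized SVRG variance bound, applied node by node, and then to re-express the resulting per-node quantities in terms of the consensus errors and the optimality gap. Since $\mb{v}^{t,k}$ and $\nabla\mb{f}(\mb{x}^{t,k})$ are both block-stacked vectors, the squared norm splits exactly as $\left\|\mb{v}^{t,k}-\nabla\mb{f}(\mb{x}^{t,k})\right\|^2=\sum_{i=1}^n\left\|\mb{v}_i^{t,k}-\nabla f_i(\mb{x}_i^{t,k})\right\|^2$; in particular, unlike in the proof of Lemma~\ref{opt_exp_svrg}, no cross-node independence is needed here because the decomposition is already diagonal. It therefore suffices to bound each conditional term $\mathbb{E}\left[\left\|\mb{v}_i^{t,k}-\nabla f_i(\mb{x}_i^{t,k})\right\|^2|\mc{F}^{t,k}\right]$ separately, which I would carry out using the local SVRG form $\mb{v}_i^{t,k}=\nabla f_{i,s_i^{t,k}}(\mb{x}_i^{t,k})-\nabla f_{i,s_i^{t,k}}(\mb{x}_i^{t,0})+\widetilde\nabla_i^t$ of the inner update with $\widetilde\nabla_i^t=\nabla f_i(\mb{x}_i^{t,0})$ (the endpoint $k=0$ being the trivial or reduced case).

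Fix a node $i$ and abbreviate $s\triangleq s_i^{t,k}$ and $Z_i\triangleq\nabla f_{i,s}(\mb{x}_i^{t,k})-\nabla f_{i,s}(\mb{x}_i^{t,0})$. Since $\mb{x}_i^{t,k}$ and $\mb{x}_i^{t,0}$ are $\mc{F}^{t,k}$-measurable and $s$ is uniform on $\{1,\dots,m_i\}$ given $\mc{F}^{t,k}$, we have $\mathbb{E}[Z_i|\mc{F}^{t,k}]=\nabla f_i(\mb{x}_i^{t,k})-\nabla f_i(\mb{x}_i^{t,0})$, hence $\mb{v}_i^{t,k}-\nabla f_i(\mb{x}_i^{t,k})=Z_i-\mathbb{E}[Z_i|\mc{F}^{t,k}]$. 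Applying the elementary bound $\mathbb{E}\|\xi-\mathbb{E}\xi\|^2\le\mathbb{E}\|\xi\|^2$ gives $\mathbb{E}[\|\mb{v}_i^{t,k}-\nabla f_i(\mb{x}_i^{t,k})\|^2|\mc{F}^{t,k}]\le\mathbb{E}[\|Z_i\|^2|\mc{F}^{t,k}]$. Next I would insert $\mb{x}^*$, writing $Z_i=\big(\nabla f_{i,s}(\mb{x}_i^{t,k})-\nabla f_{i,s}(\mb{x}^*)\big)-\big(\nabla f_{i,s}(\mb{x}_i^{t,0})-\nabla f_{i,s}(\mb{x}^*)\big)$, and apply $\|a-b\|^2\le2\|a\|^2+2\|b\|^2$ followed by the $L$-smoothness of each component function (Assumption~\ref{smooth}) to obtain $\mathbb{E}[\|Z_i\|^2|\mc{F}^{t,k}]\le2L^2\|\mb{x}_i^{t,k}-\mb{x}^*\|^2+2L^2\|\mb{x}_i^{t,0}-\mb{x}^*\|^2$.

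It remains to sum over $i$ and to split each $\|\cdot-\mb{x}^*\|^2$ about the network average. Using $\|a+b\|^2\le2\|a\|^2+2\|b\|^2$ together with $\sum_{i=1}^n\|\mb{x}_i^{t,k}-\ol{\mb{x}}^{t,k}\|^2=\|\mb{x}^{t,k}-\mb{1}_n\ol{\mb{x}}^{t,k}\|^2$, we get $\sum_{i=1}^n\|\mb{x}_i^{t,k}-\mb{x}^*\|^2\le2\|\mb{x}^{t,k}-\mb{1}_n\ol{\mb{x}}^{t,k}\|^2+2n\|\ol{\mb{x}}^{t,k}-\mb{x}^*\|^2$, and the analogous inequality at $k=0$. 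Substituting these two bounds into the summed per-node estimate produces exactly the four claimed terms with constants $4L^2$ and $4nL^2$. There is no genuine obstacle here; the only things to watch are (i) that $\mb{x}^*$ — rather than, say, $\ol{\mb{x}}^{t,0}$ — is the point inserted into both brackets of $Z_i$, so that the current and reference iterates are measured against the same global minimizer, and (ii) the multiplicative constants, each of which picks up a factor $2\times 2=4$ from the two successive applications of Young's inequality.
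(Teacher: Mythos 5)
Your proof is correct and follows essentially the same route as the paper: the variance-is-bounded-by-second-moment step applied to $Z_i=\nabla f_{i,s}(\mb{x}_i^{t,k})-\nabla f_{i,s}(\mb{x}_i^{t,0})$, insertion of $\mb{x}^*$ with Young's inequality and $L$-smoothness, then splitting about the network average and summing over $i$. The only cosmetic difference is the order of the last two steps (the paper splits about the average per node before summing), which changes nothing.
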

\begin{proof}
By the local SVRG update and using standard variance decomposition, we have that:
\begin{align}
&\mathbb{E}\left[\left\| \mb{v}_i^{t,k}-\nabla f_i(\mb{x}_i^{t,k})\right\|^2 
\Big| \mc{F}^{t,k}\right] \nonumber\\
=&~
\mathbb{E}\Big[\Big\|\underbrace{\nabla f_{i,s_i^{t,k}}(\mb{x}_{i}^{t,k}) - \nabla f_{i,s_i^{t,k}}(\mb{x}_i^{t,0})}_{X^{t,k}} -\underbrace{\left( \nabla f_i(\mb{x}_i^{t,k})-\widetilde{\nabla}_i^t\right)}_{\mathbb{E}\left[X^{t,k}|\mc{F}^{t,k}\right]}\Big\|^2\Big|\mc{F}^{t,k}\Big] \nonumber\\
\leq&~
\mathbb{E}\left[\left\|\nabla f_{i,s_i^{t,k}}(\mb{x}_{i}^{t,k}) - \nabla f_{i,s_i^{t,k}}(\mb{x}_i^{t,0})\right\|^2\Big|\mc{F}^{t,k}\right] \nonumber\\
\leq&~
2\mathbb{E}\left[\left\|\nabla f_{i,s_i^{t,k}}(\mb{x}_{i}^{t,k}) - \nabla f_{i,s_i^{t,k}}(\mb{x}^*)\right\|^2\Big|\mc{F}^{t,k}\right]
+
2\mathbb{E}\left[\left\|\nabla f_{i,s_i^{t,k}}(\mb{x}_{i}^{t,0}) - \nabla f_{i,s_i^{t,k}}(\mb{x}^{*})\right\|^2\Big|\mc{F}^{t,k}\right] \nonumber \\
=&~ 
\frac{2}{m_i}\sum_{j=1}^{m_i}\left\|\nabla f_{i,j}(\mb{x}_{i}^{t,k}) - \nabla f_{i,j}(\mb{x}^*)\right\|^2
+
\frac{2}{m_i}\sum_{j=1}^{m_i}\left\|\nabla f_{i,j}(\mb{x}_{i}^{t,0}) - \nabla f_{i,j}(\mb{x}^*)\right\|^2 \nonumber\\
\leq&~
2L^2\left\|\mb{x}_i^{t,k}-\mb{x}^*\right\|^2
+ 2L^2\left\|\mb{x}_i^{t,0}-\mb{x}^*\right\|^2
\nonumber\\
\leq&~
4L^2\left\|\mb{x}_i^{t,k}-\ol{\mb{x}}^{t,k}\right\|^2
+ 4L^2\left\|\ol{\mb{x}}^{t,k}-\mb{x}^*\right\|^2
+ 4L^2\left\|\mb{x}_i^{t,0}-\ol{\mb{x}}^{t,0}\right\|^2
+ 4L^2\left\|\ol{\mb{x}}^{t,0}-\mb{x}^*\right\|^2.
\end{align}
Summing the above inequality over~$i$ from~$1$ to~$n$ completes the proof.
\end{proof}
Next, we use Lemma~\ref{var_svrg} to refine the optimality bound in Lemma~\ref{opt_exp_svrg}.

\begin{lem}\label{opt_er_svrg}
If~$0<\alpha\leq\frac{\mu}{8L^2}$, the following holds:~$\forall t\geq0$ and~$0\leq k\leq K-1$,
\begin{align*}
\mathbb{E}\left[\left\|\ol{\mb x}^{t,k+1}-\mb{x}^*\right\|^2 \right]
\leq&~
\left(1-\frac{\mu\alpha}{2}\right)\mathbb{E}\left[\left\|\ol{\mb x}^{t,k}-\mb{x}^*\right\|^2\right]
+\frac{3L^2\alpha }{2\mu n}\mathbb{E}\left[\left\|\mb{x}^{t,k}-\mb{1}_n\ol{\mb{x}}^{t,k}\right\|^2\right]\nonumber\\
&+\frac{4L^2\alpha^2}{n^2} \mathbb{E}\left[\left\|\mb{x}^{t,0}-\mb{1}_n\ol{\mb{x}}^{t,0}\right\|^2\right]
+\frac{4L^2\alpha^2}{n} \mathbb{E}\left[\left\|\ol{\mb{x}}^{t,0}-\mb{x}^*\right\|^2\right].
\end{align*}

\end{lem}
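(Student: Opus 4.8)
The plan is to start from the exact identity in Lemma~\ref{opt_exp_svrg}, bound its four right-hand-side terms, substitute the SVRG variance estimate of Lemma~\ref{var_svrg}, collect like terms with the help of the step-size bound $\alpha\le\mu/(8L^2)$ (together with $\mu\le L$ and $n\ge1$), and finally pass from conditional to total expectation. The first observation is that the three ``deterministic'' terms on the right of Lemma~\ref{opt_exp_svrg} reassemble into a single square, since $\ol{\mb{x}}^{t,k}-\alpha\mb{h}(\mb{x}^{t,k})-\mb{x}^*=\big(\ol{\mb{x}}^{t,k}-\alpha\nabla f(\ol{\mb{x}}^{t,k})-\mb{x}^*\big)+\alpha\big(\nabla f(\ol{\mb{x}}^{t,k})-\mb{h}(\mb{x}^{t,k})\big)$; hence
\[
\mathbb{E}\!\left[\left\|\ol{\mb{x}}^{t,k+1}-\mb{x}^*\right\|^2\,\big|\,\mc{F}^{t,k}\right]
=\left\|\ol{\mb{x}}^{t,k}-\alpha\mb{h}(\mb{x}^{t,k})-\mb{x}^*\right\|^2
+\frac{\alpha^2}{n^2}\,\mathbb{E}\!\left[\left\|\mb{v}^{t,k}-\nabla\mb{f}(\mb{x}^{t,k})\right\|^2\,\big|\,\mc{F}^{t,k}\right].
\]

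For the square term I would split it along the same decomposition and apply the weighted inequality $\|\mb{a}+\mb{b}\|^2\le\tfrac{1}{1-\mu\alpha}\|\mb{a}\|^2+\tfrac{1}{\mu\alpha}\|\mb{b}\|^2$, which is valid because $0<\mu\alpha\le\mu^2/(8L^2)\le 1/8<1$. On the first piece, the full-gradient descent contraction lemma above applies — $f$ (an average of the $f_{i,j}$) is $\mu$-strongly convex and $L$-smooth and $\alpha\le\mu/(8L^2)\le1/L$ since $\mu\le L$ — giving $\|\ol{\mb{x}}^{t,k}-\alpha\nabla f(\ol{\mb{x}}^{t,k})-\mb{x}^*\|^2\le(1-\mu\alpha)^2\|\ol{\mb{x}}^{t,k}-\mb{x}^*\|^2$, so after the factor $\tfrac{1}{1-\mu\alpha}$ it contributes exactly $(1-\mu\alpha)\|\ol{\mb{x}}^{t,k}-\mb{x}^*\|^2$. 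On the second piece, Lemma~\ref{p4} gives $\|\nabla f(\ol{\mb{x}}^{t,k})-\mb{h}(\mb{x}^{t,k})\|^2\le\tfrac{L^2}{n}\|\mb{x}^{t,k}-\mb{1}_n\ol{\mb{x}}^{t,k}\|^2$, and the factor $\tfrac{1}{\mu\alpha}\cdot\alpha^2=\tfrac{\alpha}{\mu}$ turns it into $\tfrac{L^2\alpha}{\mu n}\|\mb{x}^{t,k}-\mb{1}_n\ol{\mb{x}}^{t,k}\|^2$. So the square term is at most $(1-\mu\alpha)\|\ol{\mb{x}}^{t,k}-\mb{x}^*\|^2+\tfrac{L^2\alpha}{\mu n}\|\mb{x}^{t,k}-\mb{1}_n\ol{\mb{x}}^{t,k}\|^2$.

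Next I would bound the variance term by Lemma~\ref{var_svrg} and multiply by $\alpha^2/n^2$, producing the four summands $\tfrac{4L^2\alpha^2}{n^2}\|\mb{x}^{t,k}-\mb{1}_n\ol{\mb{x}}^{t,k}\|^2$, $\tfrac{4L^2\alpha^2}{n}\|\ol{\mb{x}}^{t,k}-\mb{x}^*\|^2$, $\tfrac{4L^2\alpha^2}{n^2}\|\mb{x}^{t,0}-\mb{1}_n\ol{\mb{x}}^{t,0}\|^2$, and $\tfrac{4L^2\alpha^2}{n}\|\ol{\mb{x}}^{t,0}-\mb{x}^*\|^2$; the last two already coincide with the $(t,0)$ terms in the claimed bound. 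Adding everything, the coefficient of $\|\ol{\mb{x}}^{t,k}-\mb{x}^*\|^2$ is $(1-\mu\alpha)+\tfrac{4L^2\alpha^2}{n}\le 1-\mu\alpha+\tfrac{\mu\alpha}{2}=1-\tfrac{\mu\alpha}{2}$, using $\tfrac{4L^2\alpha^2}{n}\le 4L^2\alpha^2\le\tfrac{\mu\alpha}{2}$; and the coefficient of $\|\mb{x}^{t,k}-\mb{1}_n\ol{\mb{x}}^{t,k}\|^2$ is $\tfrac{L^2\alpha}{\mu n}+\tfrac{4L^2\alpha^2}{n^2}\le\tfrac{L^2\alpha}{\mu n}+\tfrac{\mu\alpha}{2n}\le\tfrac{L^2\alpha}{\mu n}+\tfrac{L^2\alpha}{2\mu n}=\tfrac{3L^2\alpha}{2\mu n}$, using $n\ge1$, the step-size bound, and $\mu\le L$. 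Since every term on the right is $\mc{F}^{t,k}$-measurable, taking total expectation and invoking the tower property removes the conditioning and gives the statement.

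I do not expect a conceptual obstacle: this is the standard ``contraction of the network-averaged iterate plus a variance remainder'' computation of~\cite{DSGT_Pu,DSGT_Xin}, now with the SVRG bound of Lemma~\ref{var_svrg} in place of a bounded-variance assumption. The only step needing care is the bookkeeping in the last two paragraphs: the specific weighting $\tfrac{1}{1-\mu\alpha},\tfrac{1}{\mu\alpha}$ in the triangle inequality must be chosen exactly so that the two pieces recombine into the clean coefficients $1-\mu\alpha$ and $\tfrac{L^2\alpha}{\mu n}$, and one must then check that the step-size condition $\alpha\le\mu/(8L^2)$ is just strong enough to absorb the extra $\tfrac{4L^2\alpha^2}{n}$ and $\tfrac{4L^2\alpha^2}{n^2}$ from the variance term into the final $1-\tfrac{\mu\alpha}{2}$ and $\tfrac{3L^2\alpha}{2\mu n}$.
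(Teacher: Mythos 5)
Your proposal is correct and follows essentially the same route as the paper: both rest on the gradient-descent contraction lemma, Lemma~\ref{p4}, and Lemma~\ref{var_svrg}, and both arrive at the identical intermediate bound $(1-\mu\alpha)\|\ol{\mb x}^{t,k}-\mb{x}^*\|^2+\tfrac{\alpha}{\mu}\|\nabla f(\ol{\mb x}^{t,k})-\mb{h}(\mb{x}^{t,k})\|^2$ before absorbing the variance terms via the step-size condition. The only (cosmetic) difference is that you complete the square and apply the weighted Young inequality with weights $\tfrac{1}{1-\mu\alpha},\tfrac{1}{\mu\alpha}$, whereas the paper bounds the cross term directly by Cauchy--Schwarz followed by the same Young-type splitting; the resulting coefficients coincide exactly.
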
	
\begin{proof}
Recall Lemma~\ref{opt_exp_svrg} and use standard contraction in gradient descent. 
\begin{align*}
&\mathbb{E}\left[\left\|\ol{\mb x}^{t,k+1}-\mb{x}^*\right\|^2 | \mc{F}^{t,k}\right] \nonumber\\
\leq&~
\left(1-\mu\alpha\right)^2\left\|\ol{\mb x}^{t,k}-\mb{x}^*\right\|^2
+ 2\alpha\left(1-\mu\alpha\right)\left\|\ol{\mb x}^{t,k}-\mb{x}^*\right\|\left\|\nabla f(\ol{\mb x}^{k})
-\mb{h}(\mb{x}^{t,k})\right\|+\alpha^2\left\|\nabla f(\ol{\mb x}^{t,k})
-\mb{h}(\mb{x}^{t,k})\right\|^2 \nonumber\\
&+\frac{\alpha^2}{n^2}\mathbb{E}\left[\left\| \mb{v}^{t,k}-\nabla\mb{f}(\mb{x}^{t,k})\right\|^2 
\Big| \mc{F}^{k}\right] \nonumber\\
\leq&~
\left(1-\mu\alpha\right)^2\left\|\ol{\mb x}^{t,k}-\mb{x}^*\right\|^2
+ \alpha\left(1-\mu\alpha\right)\left(\mu\left\|\ol{\mb x}^{t,k}-\mb{x}^*\right\|^2+\frac{1}{\mu}\left\|\nabla f(\ol{\mb x}^{t,k})
-\mb{h}(\mb{x}^{t,k})\right\|^2\right) \nonumber\\
&+\alpha^2\left\|\nabla f(\ol{\mb x}^{t,k})
-\mb{h}(\mb{x}^{t,k})\right\|^2 +\frac{\alpha^2}{n^2}\mathbb{E}\left[\left\| \mb{v}^{t,k}-\nabla\mb{f}(\mb{x}^{t,k})\right\|^2 
\Big| \mc{F}^{t,k}\right] \nonumber\\
=&~
\left(1-\mu\alpha\right)\left\|\ol{\mb x}^{t,k}-\mb{x}^*\right\|^2
+\frac{\alpha}{\mu}\left\|\nabla f(\ol{\mb x}^{t,k})
-\mb{h}(\mb{x}^{t,k})\right\|^2 +\frac{\alpha^2}{n^2}\mathbb{E}\left[\left\| \mb{g}^{t,k}-\nabla\mb{f}(\mb{x}^{t,k})\right\|^2 
\Big| \mc{F}^{t,k}\right]. \nonumber
\end{align*}
We then use Lemma~\ref{p4} and Lemma~\ref{var_svrg} to simplify the above inequality as follows:
\begin{align*}
&\mathbb{E}\left[\left\|\ol{\mb x}^{t,k+1}-\mb{x}^*\right\|^2 | \mc{F}^{t,k}\right]
\nonumber\\
\leq&
\left(1-\mu\alpha\right)\left\|\ol{\mb x}^{t,k}-\mb{x}^*\right\|^2
+\frac{\alpha L^2}{\mu n}\left\|\mb{x}^{t,k}-\mb{1}_n\ol{\mb{x}}^{t,k}\right\|^2 \nonumber\\
&+\frac{\alpha^2}{n^2}\left(4L^2\left\|\mb{x}^{t,k}-\mb{1}_n\ol{\mb{x}}^{t,k}\right\|^2
+ 4nL^2\left\|\ol{\mb{x}}^{t,k}-\mb{x}^*\right\|^2
+ 4L^2\left\|\mb{x}^{t,0}-\mb{1}_n\ol{\mb{x}}^{t,0}\right\|^2
+ 4nL^2\left\|\ol{\mb{x}}^{t,0}-\mb{x}^*\right\|^2\right). \nonumber\\
=&
\left(1-\mu\alpha+\frac{4L^2\alpha^2}{n}\right)\left\|\ol{\mb x}^{t,k}-\mb{x}^*\right\|^2
+\frac{\alpha L^2}{ n}\left(\frac{1}{\mu}+\frac{4\alpha}{n}\right)\left\|\mb{x}^{t,k}-\mb{1}_n\ol{\mb{x}}^{t,k}\right\|^2 \nonumber\\
&+\frac{4L^2\alpha^2}{n^2} \left\|\mb{x}^{t,0}-\mb{1}_n\ol{\mb{x}}^{t,0}\right\|^2
+\frac{4L^2\alpha^2}{n} \left\|\ol{\mb{x}}^{t,0}-\mb{x}^*\right\|^2.
\end{align*}
If~$\alpha\leq\frac{\mu n}{8L^2}$, then~$1-\mu\alpha+\frac{4L^2\alpha^2}{n}\leq1-\frac{\mu\alpha}{2}$ and~$\frac{\alpha L^2}{ n}\left(\frac{1}{\mu}+\frac{4\alpha}{n}\right)\leq\frac{3\alpha L^2}{2\mu n}$, which completes the proof.
\end{proof}
	
Next, we bound the gradient tracking error~$\mathbb{E}\left[\left\|\mb{y}^{t,k+1} - \mb{1}_n\ol{\mb{y}}^{t,k+1}\right\|^2\right]$.	

\begin{lem}\label{track_er_svrg}
	The following holds:~$\forall t\geq0$ and~$0\leq k\leq K-1$,
\begin{align*}
&\mathbb{E}\left[\left\|\mb{y}^{t,k+1} - \mb{1}_n\ol{\mb{y}}^{t,k+1}\right\|^2\right] \nonumber\\
\leq&~
\frac{98L^2}{1-\sigma^2}\mathbb{E}\left[\left\|\mb{x}^{t,k}-\mb{1}_n\ol{\mb{x}}^{t,k}\right\|^2\right] + \frac{66nL^2}{1-\sigma^2}\mathbb{E}\left[\left\|\ol{\mb x}^{t,k}-\mb{x}^*\right\|^2\right]
+ \left(\frac{1+\sigma^2}{2}+\frac{40L^2\alpha^2}{1-\sigma^2}\right)\mathbb{E}\left[\left\|\mb{y}^{t,k} - \mb{1}_n\ol{\mb{y}}^{t,k}\right\|^2\right] 
\nonumber\\
&+ \frac{58L^2}{1-\sigma^2}\mathbb{E}\left[\left\|\mb{x}^{t,0}-\mb{1}_n\ol{\mb{x}}^{t,0}\right\|^2\right]
+ \frac{58nL^2}{1-\sigma^2}\mathbb{E}\left[\left\|\ol{\mb{x}}^{t,0}-\mb{x}^*\right\|^2\right].
\end{align*}
\end{lem}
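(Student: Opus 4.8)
The plan is the standard two-step argument for a gradient-tracking error recursion: first a one-step contraction by $W$ up to a perturbation proportional to $\mathbb{E}[\|\mb{v}^{t,k+1}-\mb{v}^{t,k}\|^2]$, and then a bound on this ``gradient jump'' in terms of the quantities already controlled, namely the consensus error $\|\mb{x}^{t,k}-\mb{1}_n\ol{\mb{x}}^{t,k}\|^2$, the optimality gap $\|\ol{\mb{x}}^{t,k}-\mb{x}^*\|^2$, the tracking error itself, and --- through Lemma~\ref{var_svrg} --- the SVRG gradient variance. For the contraction step I would subtract $\mb{1}_n\ol{\mb{y}}^{t,k+1}=W_\infty\mb{y}^{t,k+1}$ from~\eqref{svrg_bv}; using $W_\infty W=W_\infty$, $(W-W_\infty)W_\infty=\mathbf{0}$ and $W_\infty^2=W_\infty$, this gives
\[
\mb{y}^{t,k+1}-\mb{1}_n\ol{\mb{y}}^{t,k+1}
=(W-W_\infty)\big(\mb{y}^{t,k}-\mb{1}_n\ol{\mb{y}}^{t,k}\big)+(I-W_\infty)\big(\mb{v}^{t,k+1}-\mb{v}^{t,k}\big).
\]
Applying the contraction bound $\|W\mb{z}-W_\infty\mb{z}\|\le\sigma\|\mb{z}-W_\infty\mb{z}\|$, the estimate $\|(I-W_\infty)\mb{z}\|\le\|\mb{z}\|$ (an orthogonal projection), the triangle inequality, and then Young's inequality with weight $\frac{1-\sigma^2}{2\sigma^2}$ yields
\[
\mathbb{E}\big[\|\mb{y}^{t,k+1}-\mb{1}_n\ol{\mb{y}}^{t,k+1}\|^2\big]
\le \frac{1+\sigma^2}{2}\,\mathbb{E}\big[\|\mb{y}^{t,k}-\mb{1}_n\ol{\mb{y}}^{t,k}\|^2\big]
+\frac{2}{1-\sigma^2}\,\mathbb{E}\big[\|\mb{v}^{t,k+1}-\mb{v}^{t,k}\|^2\big],
\]
which already produces the $\frac{1+\sigma^2}{2}$ coefficient; everything else must come out of the second factor.

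For the gradient jump I would split $\mb{v}^{t,k+1}-\mb{v}^{t,k}=(\mb{v}^{t,k+1}-\nabla\mb{f}(\mb{x}^{t,k+1}))+(\nabla\mb{f}(\mb{x}^{t,k+1})-\nabla\mb{f}(\mb{x}^{t,k}))+(\nabla\mb{f}(\mb{x}^{t,k})-\mb{v}^{t,k})$ and bound the squared norm by $3$ times the sum of the three squared norms. The first and third pieces are handled by Lemma~\ref{var_svrg} after conditioning on $\mc{F}^{t,k+1}$ and $\mc{F}^{t,k}$, respectively, and taking total expectation. For the middle piece, $L$-smoothness of each $f_i$ gives $\|\nabla\mb{f}(\mb{x}^{t,k+1})-\nabla\mb{f}(\mb{x}^{t,k})\|^2\le L^2\|\mb{x}^{t,k+1}-\mb{x}^{t,k}\|^2$, and from~\eqref{svrg_av}, $\mb{x}^{t,k+1}-\mb{x}^{t,k}=(W-I)\big(\mb{x}^{t,k}-\mb{1}_n\ol{\mb{x}}^{t,k}\big)-\alpha\mb{y}^{t,k}$, so with $\|W-I\|\le 1+\sigma\le 2$ and $\|\mb{y}^{t,k}\|^2=\|\mb{y}^{t,k}-\mb{1}_n\ol{\mb{y}}^{t,k}\|^2+n\|\ol{\mb{y}}^{t,k}\|^2$ the jump is bounded by the consensus error, the tracking error, and $n\|\ol{\mb{y}}^{t,k}\|^2$. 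The last term I would control through the bias--variance identity implied by Lemma~\ref{p3}, $\mathbb{E}\big[\|\ol{\mb{y}}^{t,k}\|^2\mid\mc{F}^{t,k}\big]=\|\mb{h}(\mb{x}^{t,k})\|^2+\frac{1}{n^2}\mathbb{E}\big[\|\nabla\mb{f}(\mb{x}^{t,k})-\mb{v}^{t,k}\|^2\mid\mc{F}^{t,k}\big]$ --- the variance term is exactly the one evaluated in~\eqref{v1} --- together with $\|\mb{h}(\mb{x}^{t,k})\|^2\le\frac{2L^2}{n}\|\mb{x}^{t,k}-\mb{1}_n\ol{\mb{x}}^{t,k}\|^2+2L^2\|\ol{\mb{x}}^{t,k}-\mb{x}^*\|^2$ (Lemma~\ref{p4}, $L$-smoothness of $f$, and $\nabla f(\mb{x}^*)=\mathbf{0}$) and one further application of Lemma~\ref{var_svrg}.

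The only remaining point is that invoking Lemma~\ref{var_svrg} at iterate $k+1$ brings in $\mathbb{E}\big[\|\mb{x}^{t,k+1}-\mb{1}_n\ol{\mb{x}}^{t,k+1}\|^2\big]$ and $\mathbb{E}\big[\|\ol{\mb{x}}^{t,k+1}-\mb{x}^*\|^2\big]$, which are not yet of the target (iterate-$k$) form; I would eliminate the first using Lemma~\ref{consensus_er_svrg} and the second using the crude bound $\|\ol{\mb{x}}^{t,k+1}-\mb{x}^*\|^2\le 2\|\ol{\mb{x}}^{t,k}-\mb{x}^*\|^2+2\alpha^2\|\ol{\mb{y}}^{t,k}\|^2$ together with the estimate on $\mathbb{E}[\|\ol{\mb{y}}^{t,k}\|^2]$ just derived. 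Collecting all contributions and taking total expectation then produces the four stated coefficients; the $\alpha^2$ generated by the $\mb{x}$-step appears partly explicitly (as the $\frac{40L^2\alpha^2}{1-\sigma^2}$ factor on the tracking error) and partly, weighted by $L^2$, is dominated by the $O(L^2)$ coefficients in the small-step-size regime used throughout. I expect the main obstacle to be exactly this bookkeeping: every use of the SVRG variance bound at the shifted index $k+1$ has to be rolled back consistently to iterate-$k$ and iterate-$0$ quantities, and the final constants $98,66,58,58,40$ are the accumulation of many individually loose estimates (the $\|W-I\|\le2$, the $3(\cdot)$ splits, the Young weights), so the arithmetic must be tracked carefully; there is no conceptual difficulty beyond that.
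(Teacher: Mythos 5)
Your skeleton matches the paper's: a one-step contraction for the tracking error, a bound on the gradient jump $\mb{v}^{t,k+1}-\mb{v}^{t,k}$ via $L$-smoothness and Lemma~\ref{var_svrg}, and a rollback of the $(k+1)$-indexed quantities through Lemmas~\ref{consensus_er_svrg} and~\ref{opt_er_svrg}. The divergence is at your very first step, and it is consequential because the lemma asserts explicit constants. You apply Young's inequality wholesale to the cross term $2\langle W\mb{y}^{t,k}-W_\infty\mb{y}^{t,k},\,(I-W_\infty)(\mb{v}^{t,k+1}-\mb{v}^{t,k})\rangle$, which leaves the perturbation $\mathbb{E}[\|\mb{v}^{t,k+1}-\mb{v}^{t,k}\|^2]$ carrying a $\tfrac{2}{1-\sigma^2}$ prefactor. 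The paper keeps $\|\mb{v}^{t,k+1}-\mb{v}^{t,k}\|^2$ with coefficient $1$ and handles the cross term separately: the part paired with $\nabla\mb{f}(\mb{x}^{t,k+1})-\nabla\mb{f}(\mb{x}^{t,k})$ is Young-ified with weight $\tfrac{1-\sigma^2}{2}$ (which alone yields the $\tfrac{1+\sigma^2}{2}$ coefficient), while the part paired with $\nabla\mb{f}(\mb{x}^{t,k})-\mb{v}^{t,k}$ is shown---by unrolling $W\mb{y}^{t,k}=W^2\mb{y}^{t,k-1}+W(\mb{v}^{t,k}-\mb{v}^{t,k-1})$ and using conditional independence of the $\mb{v}_i^{t,k}$ across nodes---to cost only $\tfrac{2}{n}\mathbb{E}[\|\mb{v}^{t,k}-\nabla\mb{f}(\mb{x}^{t,k})\|^2]$, an $O(1)$ rather than $O(\tfrac{1}{1-\sigma^2})$ multiple of the SVRG variance. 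The paper also splits the jump into only two pieces (a factor $2$), with the stochastic piece decomposing exactly by the vanishing of its conditional cross term, rather than your lossy three-way split (a factor $3$).

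The concrete consequence is that your route cannot reproduce the stated coefficients, and in one place produces a term of the wrong order. When you invoke Lemma~\ref{var_svrg} at index $k+1$ and roll $\mathbb{E}[\|\mb{x}^{t,k+1}-\mb{1}_n\ol{\mb{x}}^{t,k+1}\|^2]$ back through Lemma~\ref{consensus_er_svrg}, the resulting $\tfrac{2\alpha^2}{1-\sigma^2}\mathbb{E}[\|\mb{y}^{t,k}-\mb{1}_n\ol{\mb{y}}^{t,k}\|^2]$ gets multiplied by your outer $\tfrac{2}{1-\sigma^2}$ (and the $3\cdot 4L^2$ from the split), giving a tracking-error coefficient of order $\tfrac{L^2\alpha^2}{(1-\sigma^2)^2}$; the stated bound has only $\tfrac{40L^2\alpha^2}{1-\sigma^2}$, and the paper's unit prefactor is what keeps it there. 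Likewise your crude factor-$2$ rollback of $\|\ol{\mb{x}}^{t,k+1}-\mb{x}^*\|^2$ (the paper uses the contraction of Lemma~\ref{opt_er_svrg} instead) and the $\tfrac{2}{1-\sigma^2}\cdot 3$ multipliers push the consensus and optimality-gap coefficients past $98$ and $66$. What you prove is a true inequality of the identical form, and one can check it would still support the downstream spectral-radius argument after re-tuning $\alpha$ and the entries of $G_\alpha$; but as a proof of this lemma with these numbers, the martingale and independence structure of the cross term that you discarded at the first step is exactly the missing ingredient.
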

\begin{proof}
Using the gradient tracking update, we have:
\begin{align*}
	&\left\|\mb{y}^{t,k+1} - W_{\infty}\mb{y}^{t,k+1}\right\|^2
	\nonumber\\
	=&
	\left\|W\mb{y}^{t,k} +
	\mb{v}^{t,k+1}-\mb{v}^{t,k}-
	W_{\infty}\left(W\mb{y}^{t,k} +
	\mb{v}^{t,k+1}-\mb{v}^{t,k}\right)
	\right\|^2  \nonumber\\ 
	=&
	\left\|W\mb{y}^{t,k}-
	W_{\infty}\mb{y}^{t,k} + \left(I_n-
	W_{\infty}\right)\left(
	\mb{v}^{t,k+1}-\mb{v}^{t,k}\right)
	\right\|^2 \nonumber\\
	\leq&~
	\sigma^2 \left\|\mb{y}^{t,k}-
	W_{\infty}\mb{y}^{t,k}\right\|^2 + \left\|\mb{v}^{t,k+1}-\mb{v}^{t,k}\right\|^2
	+ 2\Big\langle W\mb{y}^{t,k}-
	W_{\infty}\mb{y}^{t,k}, 
	\mb{v}^{t,k+1}-\mb{v}^{t,k}\Big\rangle,
	\end{align*}
where we used the fact that~$\left\|I-W_\infty\right\| = 1$ and~$\big\langle W\mb{y}^{t,k}-
W_{\infty}\mb{y}^{t,k}, 
W_\infty\left(\mb{v}^{t,k+1}-\mb{v}^{t,k}\right)\big\rangle = 0$.

Next, we bound~$\mathbb{E}\left[\left\|\mb{v}^{t,k+1}-\mb{v}^{t,k}\right\|^2\Big|\mc{F}^{t,k}\right]$.
\begin{align}
&\mathbb{E}\left[\left\|\mb{v}^{t,k+1}-\mb{v}^{t,k}\right\|^2\Big|\mc{F}^{t,k}\right] \nonumber\\
\leq&~
2\mathbb{E}\left[\left\|\mb{v}^{t,k+1}-\mb{v}^{t,k} 
-\left(\nabla\mb{f}(\mb{x}^{t,k+1})-\nabla\mb{f}(\mb{x}^{t,k})\right)\right\|^2\Big|\mc{F}^{t,k}\right]
+
2\mathbb{E}\left[\left\|\nabla\mb{f}(\mb{x}^{t,k+1})-\nabla\mb{f}(\mb{x}^{t,k})\right\|^2\Big|\mc{F}^{t,k}\right] 
\nonumber\\
\triangleq&~2U_1 + 2U_2.
\end{align}
We start with~$U_2$. Following~\cite{DSGT_Xin},
\begin{align}\label{gradf}
\left\|\nabla\mb{f}(\mb{x}^{t,k+1})-\nabla\mb{f}(\mb{x}^{t,k})\right\|^2
\leq&~L^2\left\|\mb{x}^{t,k+1}-\mb{x}^{t,k}\right\|^2
= L^2\left\|W\mb{x}^{t,k}-\alpha\mb{y}^{t,k}-\mb{x}^{t,k}\right\|^2
\nonumber\\
=&~ L^2\left\|\left(W-I_n\right)\left(\mb{x}^{t,k}-W_\infty\mb{x}^{t,k}\right)-\alpha\mb{y}^{t,k}\right\|^2
\nonumber\\
\leq&~
8L^2\left\|\mb{x}^{t,k}-W_\infty\mb{x}^{t,k}\right\|^2 + 2\alpha^2L^2\left\|\mb{y}^{t,k}\right\|^2,
\end{align}
where~$\left\|\mb{y}^{t,k}\right\|^2$ can be bounded as the following:
\begin{align}
\left\|\mb{y}^{t,k}\right\|
=&~\left\|\mb{y}^{t,k} - W_\infty\mb{y}^{t,k} + W_\infty\mb{v}^{t,k}
-W_\infty\nabla\mb{f}(\mb{x}^{t,k}) + W_\infty\nabla\mb{f}(\mb{x}^{t,k})
-W_\infty\nabla\mb{f}(\mb{1}_n\mb{x}^*)\right\| \nonumber\\
\leq&~
\left\|\mb{y}^{t,k} - W_\infty\mb{y}^{t,k}\right\|
+ \sqrt{n} \left\|\ol{\mb{v}}^{t,k}-\mb{h}(\mb{x}^{t,k})\right\| 
+ L\left\|\mb{x}^{t,k}-\mb{1}_n\mb{x}^*\right\|
\nonumber\\
\leq&~
\left\|\mb{y}^{t,k} - W_\infty\mb{y}^{t,k}\right\|
+ L\left\|\mb{x}^{t,k}-\mb{1}_n\ol{\mb{x}}^{t,k}\right\|
+ \sqrt{n}L\left\|\ol{\mb{x}}^{t,k}-\mb{x}^*\right\|
+ \sqrt{n} \left\|\ol{\mb{v}}^{t,k}-\mb{h}(\mb{x}^{t,k})\right\|.
\nonumber
\end{align}
Squaring the above inequality, we have:
\begin{align}\label{y_svrg}
\left\|\mb{y}^{t,k}\right\|^2
\leq 4\left\|\mb{y}^{t,k} - \mb{1}_n\ol{\mb{y}}^{t,k}\right\|^2
+ 4L^2\left\|\mb{x}^{t,k}-\mb{1}_n\ol{\mb{x}}^{t,k}\right\|^2
+ 4nL^2\left\|\ol{\mb{x}}^{t,k}-\mb{x}^*\right\|^2
+ \frac{4}{n}\left\|\mb{v}^{t,k}-\nabla\mb{f}(\mb{x}^{t,k})\right\|^2.
\end{align}
Using~\eqref{gradf} and~\eqref{y_svrg} bounds~$U_2$ as follows:
\begin{align}
U_2\leq&
\left(8L^2 + 8L^4\alpha^2\right)\left\|\mb{x}^{t,k}-\mb{1}_n\ol{\mb{x}}^{t,k}\right\|^2
+ 8nL^4\alpha^2\left\|\ol{\mb{x}}^{t,k}-\mb{x}^*\right\|^2
+ 8L^2\alpha^2\mathbb{E}\left[\left\|\mb{y}^{t,k} - \mb{1}_n\ol{\mb{y}}^{t,k}\right\|^2\Big |\mc{F}^{t,k}\right] \nonumber\\
&+ \frac{8L^2\alpha^2}{n}\left(4L^2\left\|\mb{x}^{t,k}-\mb{1}_n\ol{\mb{x}}^{t,k}\right\|^2
+ 4nL^2\left\|\ol{\mb{x}}^{t,k}-\mb{x}^*\right\|^2
+ 4L^2\left\|\mb{x}^{t,0}-\mb{1}_n\ol{\mb{x}}^{t,0}\right\|^2
+ 4nL^2\left\|\ol{\mb{x}}^{t,0}-\mb{x}^*\right\|^2\right)
\nonumber
\end{align}
If~$\alpha\leq\frac{1}{2L}$, then~$\alpha^2\leq\frac{1}{4L^2}$, we have the following:
\begin{align}
U_2\leq&~
18L^2\left\|\mb{x}^{t,k}-\mb{1}_n\ol{\mb{x}}^{t,k}\right\|^2
+ 10nL^2\left\|\ol{\mb{x}}^{t,k}-\mb{x}^*\right\|^2
+ 8L^2\alpha^2\mathbb{E}\left[\left\|\mb{y}^{t,k} - \mb{1}_n\ol{\mb{y}}^{t,k}\right\|^2\Big |\mc{F}^{t,k}\right] \nonumber\\
&+ \frac{8L^2}{n}\left\|\mb{x}^{t,0}-\mb{1}_n\ol{\mb{x}}^{t,0}\right\|^2
+ 8L^2\left\|\ol{\mb{x}}^{t,0}-\mb{x}^*\right\|^2.
\end{align}
Next we bound~$U_1$. Following a similar argument as before,
\begin{align}\label{U1}
U_1 
=&~
\mathbb{E}\left[\left\|\mb{v}^{t,k+1}-\mb{v}^{t,k} 
-\left(\nabla\mb{f}(\mb{x}^{t,k+1})-\nabla\mb{f}(\mb{x}^{t,k})\right)\right\|^2\Big|\mc{F}^{t,k}\right]
\nonumber\\
=&~
\mathbb{E}\left[\left\|\mb{v}^{t,k+1}-\nabla\mb{f}(\mb{x}^{t,k+1})\right\|^2\Big|\mc{F}^{t,k}\right]
+\mathbb{E}\left[\left\|\mb{v}^{t,k}-\nabla\mb{f}(\mb{x}^{t,k})\right\|^2\Big|\mc{F}^{t,k}\right]
\nonumber\\
&+ \mathbb{E}\left[\Big\langle\mb{v}^{t,k+1}-\nabla\mb{f}(\mb{x}^{t,k+1}),\mb{v}^{t,k}-\nabla\mb{f}(\mb{x}^{t,k})\Big\rangle\Big|\mc{F}^{t,k}\right]
\nonumber\\
=&~
\mathbb{E}\left[\mathbb{E}\left[\left\|\mb{v}^{t,k+1}-\nabla\mb{f}(\mb{x}^{t,k+1})\right\|^2\Big|\mc{F}^{t,k+1}\right]\Big|\mc{F}^{t,k}\right]
+\mathbb{E}\left[\left\|\mb{v}^{t,k} 
-\nabla\mb{f}(\mb{x}^{t,k})\right\|^2\Big|\mc{F}^{t,k}\right].
\end{align}
We first bound~$\mathbb{E}\left[\mathbb{E}\left[\left\|\mb{v}^{t,k+1}-\nabla\mb{f}(\mb{x}^{t,k+1})\right\|^2\Big|\mc{F}^{t,k+1}\right]\Big|\mc{F}^{t,k}\right]$.
\begin{align}
&\mathbb{E}\left[\mathbb{E}\left[\left\|\mb{v}^{t,k+1}-\nabla\mb{f}(\mb{x}^{t,k+1})\right\|^2\Big|\mc{F}^{t,k+1}\right]\Big|\mc{F}^{t,k}\right] \nonumber\\
\leq&~ 4L^2\mathbb{E}\left[\left\|\mb{x}^{t,k+1}-\mb{1}_n\ol{\mb{x}}^{t,k+1}\right\|^2\Big|\mc{F}^{t,k}\right]
+ 4nL^2\mathbb{E}\left[\left\|\ol{\mb{x}}^{t,k+1} - \mb{x}^*\right\|^2\Big|\mc{F}^{t,k}\right]\nonumber\\
&+ 4L^2\left\|\mb{x}^{t,0}-\mb{1}_n\ol{\mb{x}}^{t,0}\right\|^2
+ 4nL^2\left\|\ol{\mb{x}}^{t,0}-\mb{x}^*\right\|^2
\nonumber\\
\leq&~
4L^2\left(\frac{1+\sigma^2}{2}\left\|\mb{x}^{t,k}-\mb{1}_n\ol{\mb{x}}^{t,k}\right\|^2 + \frac{2\alpha^2}{1-\sigma^2}\mathbb{E}\left[\left\|\mb{y}^{t,k}-\mb{1}_n\ol{\mb{y}}^{t,k}\right\|^2\Big |\mc{F}^{t,k}\right]\right) \nonumber\\
&+4nL^2\left(\left(1-\frac{\mu\alpha}{2}\right)\left\|\ol{\mb x}^{t,k}-\mb{x}^*\right\|^2
+ \frac{3L^2\alpha}{2\mu n}
\left\|\mb{x}^{t,k}-\mb{1}_n\ol{\mb{x}}^{t,k}\right\|^2
+ \frac{4L^2\alpha^2}{n^2} \left\|\mb{x}^{t,0}-\mb{1}_n\ol{\mb{x}}^{t,0}\right\|^2
+\frac{4L^2\alpha^2}{n} \left\|\ol{\mb{x}}^{t,0}-\mb{x}^*\right\|^2\right)\nonumber\\
&+ 4L^2\left\|\mb{x}^{t,0}-\mb{1}_n\ol{\mb{x}}^{t,0}\right\|^2
+ 4nL^2\left\|\ol{\mb{x}}^{t,0}-\mb{x}^*\right\|^2
\nonumber\\
\leq&~
5L^2\left\|\mb{x}^{t,k}-\mb{1}_n\ol{\mb{x}}^{t,k}\right\|^2 + 4nL^2\left\|\ol{\mb x}^{t,k}-\mb{x}^*\right\|^2
+ \frac{8L^2\alpha^2}{1-\sigma^2}\mathbb{E}\left[\left\|\mb{y}^{t,k}-\mb{1}_n\ol{\mb{y}}^{t,k}\right\|^2\Big|\mc{F}^{t,k} \right] \nonumber\\
&+ 5L^2\left\|\mb{x}^{t,0}-\mb{1}_n\ol{\mb{x}}^{t,0}\right\|^2
+ 5nL^2\left\|\ol{\mb{x}}^{t,0}-\mb{x}^*\right\|^2,
\end{align}
where in the last inequality we set~$0<\alpha\leq\frac{\mu}{6L^2}$.
Now we derive an upper bound for~$U_1$ as follows.
\begin{align}\label{U2}
U_1 \leq&~
5L^2\left\|\mb{x}^{t,k}-\mb{1}_n\ol{\mb{x}}^{t,k}\right\|^2 + 4nL^2\left\|\ol{\mb x}^{t,k}-\mb{x}^*\right\|^2
+ \frac{8L^2\alpha^2}{1-\sigma^2}\mathbb{E}\left[\left\|\mb{y}^{t,k}-\mb{1}_n\ol{\mb{y}}^{t,k}\right\|^2\Big|\mc{F}^{t,k} \right] \nonumber\\
&+ 5L^2\left\|\mb{x}^{t,0}-\mb{1}_n\ol{\mb{x}}^{t,0}\right\|^2
+ 5nL^2\left\|\ol{\mb{x}}^{t,0}-\mb{x}^*\right\|^2
\nonumber\\
&+
4L^2\left\|\mb{x}^{t,k}-\mb{1}_n\ol{\mb{x}}^{t,k}\right\|^2
+ 4nL^2\left\|\ol{\mb{x}}^{t,k}-\mb{x}^*\right\|^2
+ 4L^2\left\|\mb{x}^{t,0}-\mb{1}_n\ol{\mb{x}}^{t,0}\right\|^2
+ 4nL^2\left\|\ol{\mb{x}}^{t,0}-\mb{x}^*\right\|^2
\nonumber\\
=&~
9L^2\left\|\mb{x}^{t,k}-\mb{1}_n\ol{\mb{x}}^{t,k}\right\|^2
+ 8nL^2\left\|\ol{\mb{x}}^{t,k}-\mb{x}^*\right\|^2
+
\frac{8L^2\alpha^2}{1-\sigma^2}\mathbb{E}\left[\left\|\mb{y}^{t,k}-\mb{1}_n\ol{\mb{y}}^{t,k}\right\|^2\Big|\mc{F}^{t,k} \right]
\nonumber\\
&+ 9L^2\left\|\mb{x}^{t,0}-\mb{1}_n\ol{\mb{x}}^{t,0}\right\|^2
+ 9nL^2\left\|\ol{\mb{x}}^{t,0}-\mb{x}^*\right\|^2
\end{align}
We apply the upper bounds on~$U_1,U_2$ in~\eqref{U1} and~\eqref{U2} to derive an upper bound for $\mathbb{E}\left[\left\|\mb{v}^{t,k+1}-\mb{v}^{t,k}\right\|^2\Big|\mc{F}^{t,k}\right]$.
\begin{align}\label{track_error_1}
&\mathbb{E}\left[\left\|\mb{v}^{t,k+1}-\mb{v}^{t,k}\right\|^2\Big|\mc{F}^{t,k}\right]
\leq 2U_1 + 2U_2 \nonumber\\
\leq&~
54L^2\left\|\mb{x}^{t,k}-\mb{1}_n\ol{\mb{x}}^{t,k}\right\|^2 + 38nL^2\left\|\ol{\mb x}^{t,k}-\mb{x}^*\right\|^2
+ \frac{24L^2\alpha^2}{1-\sigma^2}\mathbb{E}\left[\left\|\mb{y}^{t,k} - \mb{1}_n\ol{\mb{y}}^{t,k}\right\|^2\Big|\mc{F}^k\right] 
\nonumber\\
&+ 34L^2\left\|\mb{x}^{t,0}-\mb{1}_n\ol{\mb{x}}^{t,0}\right\|^2
+ 34nL^2\left\|\ol{\mb{x}}^{t,0}-\mb{x}^*\right\|^2
\end{align}
Next, we bound~$2\mathbb{E}\left[\Big\langle W\mb{y}^{t,k}-
W_{\infty}\mb{y}^{t,k}, 
\mb{v}^{t,k+1}-\mb{v}^{t,k}\Big\rangle\Big |\mc{F}^{t,k}\right]$.
\begin{align}
&2\mathbb{E}\left[\Big\langle W\mb{y}^{t,k}-
W_{\infty}\mb{y}^{t,k}, \mb{v}^{t,k+1}-\mb{v}^{t,k}\Big\rangle\Big |\mc{F}^{t,k}\right]\nonumber\\
=&~2\mathbb{E}\left[\Big\langle W\mb{y}^{t,k}-
W_{\infty}\mb{y}^{t,k}, \nabla\mb{f}(\mb{x}^{t,k+1})-\nabla\mb{f}(\mb{x}^{t,k})\Big\rangle\Big |\mc{F}^{t,k}\right]
+ 2\mathbb{E}\left[\Big\langle W\mb{y}^{t,k}-
W_{\infty}\mb{y}^{t,k}, \nabla\mb{f}(\mb{x}^{t,k})-\mb{v}^{t,k}\Big\rangle\Big |\mc{F}^{t,k}\right]\nonumber\\
\triangleq&~Y_1 + Y_2.
\end{align}	
We bound~$Y_1$ and~$Y_2$ separately, starting with~$Y_1$.
\begin{align*}
&Y_1 = \mathbb{E}\left[\Big\langle W\mb{y}^{t,k}-
W_{\infty}\mb{y}^{t,k}, \nabla\mb{f}(\mb{x}^{t,k+1})-\nabla\mb{f}(\mb{x}^{t,k})\Big\rangle\Big |\mc{F}^{t,k}\right] \nonumber\\
\leq&~\frac{1-\sigma^2}{2}\mathbb{E}\left[\left\|\mb{y}^{t,k}-
W_{\infty}\mb{y}^{t,k}\right\|^2\Big|\mc{F}^{t,k}\right]
+ \frac{2\sigma^2}{1-\sigma^2}\mathbb{E}\left[\left\|\nabla\mb{f}(\mb{x}^{t,k+1})-\nabla\mb{f}(\mb{x}^{t,k})\right\|^2\Big | \mc{F}^{t,k}\right].
\end{align*}
We then apply the bound on~$U_2$ in~\eqref{U2} to obtain an upper bound on~$Y_1$. 
\begin{align}\label{Y1}
Y_1\leq&~\frac{1-\sigma^2}{2}\mathbb{E}\left[\left\|\mb{y}^{t,k}-
W_{\infty}\mb{y}^{t,k}\right\|^2\Big|\mc{F}^{t,k}\right]\nonumber\\ 
&+ \frac{2\sigma^2}{1-\sigma^2}\left(18L^2\left\|\mb{x}^{t,k}-\mb{1}_n\ol{\mb{x}}^{t,k}\right\|^2
+ 10nL^2\left\|\ol{\mb{x}}^{t,k}-\mb{x}^*\right\|^2
+ 8L^2\alpha^2\mathbb{E}\left[\left\|\mb{y}^{t,k} - \mb{1}_n\ol{\mb{y}}^{t,k}\right\|^2\Big |\mc{F}^{t,k}\right] \right)\nonumber\\
&+ \frac{16L^2}{1-\sigma^2}\left\|\mb{x}^{t,0}-\mb{1}_n\ol{\mb{x}}^{t,0}\right\|^2
+ \frac{16L^2}{1-\sigma^2}\left\|\ol{\mb{x}}^{t,0}-\mb{x}^*\right\|^2\nonumber\\
\leq&~
\frac{36L^2}{1-\sigma^2}\left\|\mb{x}^{t,k}-\mb{1}_n\ol{\mb{x}}^{t,k}\right\|^2
+ \frac{20nL^2}{1-\sigma^2}\left\|\ol{\mb{x}}^{t,k}-\mb{x}^*\right\|^2
+ \left(\frac{1-\sigma^2}{2}+\frac{16L^2\alpha^2}{1-\sigma^2}\right)\mathbb{E}\left[\left\|\mb{y}^{t,k} - \mb{1}_n\ol{\mb{y}}^{t,k}\right\|^2\Big |\mc{F}^{t,k}\right] \nonumber\\
&+
\frac{16L^2}{1-\sigma^2}\left\|\mb{x}^{t,0}-\mb{1}_n\ol{\mb{x}}^{t,0}\right\|^2
+ \frac{16L^2}{1-\sigma^2}\left\|\ol{\mb{x}}^{t,0}-\mb{x}^*\right\|^2.
\end{align}
Towards~$Y_2$, we first note that:
\begin{align}\label{Y2_0}
Y_2 = 2\mathbb{E}\left[\Big\langle W\mb{y}^{t,k}, \nabla\mb{f}(\mb{x}^{t,k})-\mb{v}^{t,k}\Big\rangle\Big |\mc{F}^{t,k}\right]
-
2\mathbb{E}\left[\Big\langle
W_{\infty}\mb{y}^{t,k}, \nabla\mb{f}(\mb{x}^{t,k})-\mb{v}^{t,k}\Big\rangle\Big |\mc{F}^{t,k}\right].
\end{align}
For the first term in~\eqref{Y2_0}, we have:
\begin{align*}
&2\mathbb{E}\left[\Big\langle W\mb{y}^{t,k}, \nabla\mb{f}(\mb{x}^{t,k})-\mb{v}^{t,k}\Big\rangle\Big |\mc{F}^{t,k}\right] \nonumber\\
=&~2\mathbb{E}\left[\Big\langle W^2\mb{y}^{t,k-1}+W\left(\mb{v}^{t,k}-\mb{v}^{t,k-1}\right), \nabla\mb{f}(\mb{x}^{t,k})-\mb{v}^{t,k}\Big\rangle\Big |\mc{F}^{t,k}\right] \nonumber\\
=&~2\mathbb{E}\left[\Big\langle W\mb{v}^{t,k}, \nabla\mb{f}(\mb{x}^{t,k})-\mb{v}^{t,k}\Big\rangle\Big |\mc{F}^{t,k}\right] \nonumber\\
=&~
2\sum_{i=1}^{n}\mathbb{E}\left[\Big\langle\sum_{r=1}^{n}w_{ir}\mb{v}_r^{t,k},\nabla f_i(\mb{x}_i^{t,k})-\mb{v}_i^{t,k}\Big\rangle\Big|\mc{F}^{t,k}\right] \nonumber\\
=&~
2\sum_{i=1}^{n}w_{ii}\mathbb{E}\left[\Big\langle \mb{v}_i^{t,k},\nabla f_i(\mb{x}_i^{t,k})-\mb{v}_i^{t,k}\Big\rangle\Big|\mc{F}^{t,k}\right]
=
2\sum_{i=1}^{n}w_{ii}\mathbb{E}\left[\Big\langle \mb{v}_i^{t,k}-\nabla f_i(\mb{x}_i^{t,k}),\nabla f_i(\mb{x}_i^{t,k})-\mb{v}_i^{t,k}\Big\rangle\Big|\mc{F}^{t,k}\right]\leq 0
\end{align*}
For the second term in~\eqref{Y2_0}, using the same argument as the above, we have that:
\begin{align*}
&-
2\mathbb{E}\left[\Big\langle
W_{\infty}\mb{y}^{t,k}, \nabla\mb{f}(\mb{x}^{t,k})-\mb{v}^{t,k}\Big\rangle\Big |\mc{F}^{t,k}\right]\nonumber\\
=&~-\frac{2}{n}\sum_{i=1}^{n}\mathbb{E}\left[\Big\langle \mb{v}_i^{t,k}-\nabla f_i(\mb{x}_i^{t,k}),\nabla f_i(\mb{x}_i^{t,k})-\mb{v}_i^{t,k}\Big\rangle\Big|\mc{F}^{t,k}\right]
=\frac{2}{n}\mathbb{E}\left[\left\|\mb{v}^{t,k}-\nabla \mb{f}(\mb{x}^{t,k})\right\|^2\Big|\mc{F}^{t,k}\right].
\end{align*}
Using Lemma~\ref{var_svrg}, we obtain an upper bound on~$Y_2$ as follows:
\begin{align}\label{Y2}
Y_2 \leq
8L^2\left\|\mb{x}^{t,k}-\mb{1}_n\ol{\mb{x}}^{t,k}\right\|^2
+ 8nL^2\left\|\ol{\mb{x}}^{t,k}-\mb{x}^*\right\|^2
+ 8L^2\left\|\mb{x}^{t,0}-\mb{1}_n\ol{\mb{x}}^{t,0}\right\|^2
+ 8nL^2\left\|\ol{\mb{x}}^{t,0}-\mb{x}^*\right\|^2.
\end{align}
Combining the upper bounds on~$Y_1$ and~$Y_2$ in~\eqref{Y1} and~\eqref{Y2}, we obtain:
\begin{align}\label{track_error_2}
&2\mathbb{E}\left[\Big\langle W\mb{y}^{t,k}-
W_{\infty}\mb{y}^{t,k}, 
\mb{v}^{t,k+1}-\mb{v}^{t,k}\Big\rangle\Big |\mc{F}^{t,k}\right] \nonumber\\
\leq&~ Y_1 + Y_2\nonumber\\
\leq&~
\frac{44L^2}{1-\sigma^2}\left\|\mb{x}^{t,k}-\mb{1}_n\ol{\mb{x}}^{t,k}\right\|^2
+ \frac{28nL^2}{1-\sigma^2}\left\|\ol{\mb{x}}^{t,k}-\mb{x}^*\right\|^2
+ \left(\frac{1-\sigma^2}{2}+\frac{16L^2\alpha^2}{1-\sigma^2}\right)\mathbb{E}\left[\left\|\mb{y}^{t,k} - \mb{1}_n\ol{\mb{y}}^{t,k}\right\|^2\Big |\mc{F}^{t,k}\right] \nonumber\\
&+
\frac{24L^2}{1-\sigma^2}\left\|\mb{x}^{t,0}-\mb{1}_n\ol{\mb{x}}^{t,0}\right\|^2
+ \frac{24nL^2}{1-\sigma^2}\left\|\ol{\mb{x}}^{t,0}-\mb{x}^*\right\|^2.
\end{align}
Finally, we obtain the upper bound on~$\mathbb{E}\left[\left\|\mb{y}^{t,k+1} - \mb{1}_n\ol{\mb{y}}^{t,k+1}\right\|^2\Big|\mc{F}^{t,k}\right]$ by combing~\eqref{track_error_1} and~\eqref{track_error_2}.
\begin{align*}
&\mathbb{E}\left[\left\|\mb{y}^{t,k+1} - \mb{1}_n\ol{\mb{y}}^{t,k+1}\right\|^2\Big|\mc{F}^{t,k}\right] \nonumber\\
\leq&~
\frac{98L^2}{1-\sigma^2}\left\|\mb{x}^{t,k}-\mb{1}_n\ol{\mb{x}}^{t,k}\right\|^2 + \frac{66nL^2}{1-\sigma^2}\left\|\ol{\mb x}^{t,k}-\mb{x}^*\right\|^2
+ \left(\frac{1+\sigma^2}{2}+\frac{40L^2\alpha^2}{1-\sigma^2}\right)\mathbb{E}\left[\left\|\mb{y}^{t,k} - \mb{1}_n\ol{\mb{y}}^{t,k}\right\|^2\Big|\mc{F}^k\right] 
\nonumber\\
&+ \frac{58L^2}{1-\sigma^2}\left\|\mb{x}^{t,0}-\mb{1}_n\ol{\mb{x}}^{t,0}\right\|^2
+ \frac{58nL^2}{1-\sigma^2}\left\|\ol{\mb{x}}^{t,0}-\mb{x}^*\right\|^2.
\end{align*}
Taking the total expectation of the above completes the proof.
\end{proof}
\subsection{Main Results}
From Lemma~\ref{consensus_er_svrg}, Lemma~\ref{opt_er_svrg} and Lemma~\ref{track_er_svrg}, we have that: 
if~$0<\alpha\leq\frac{1}{8QL}$, then the following (entry-wise) matrix inequality holds:~$\forall t\geq0$ and~$0\leq k\leq K-1$,
\begin{align*}
\underbrace{\left[
	\begin{array}{l}
	\mathbb{E}\left[\left\|\mb{x}^{t,k+1}-\mb{1}_n\ol{\mb{x}}^{k+1}\right\|^2\right] \\
	\mathbb{E}\left[n\left\|\ol{\mb x}^{t,k+1}-\mb{x}^*\right\|^2\right] \\
	\mathbb{E}\left[\left\|\mb{y}^{t,k+1} - \mb{1}\ol{\mb{y}}^{t,k+1}\right\|^2\right]
	\end{array}
	\right]}_{\triangleq\mb{u}^{t,k+1}}
	\leq& \underbrace{\left[
	\begin{array}{cccc}
	\frac{1+\sigma^2}{2} & 0 & \frac{2\alpha^2}{1-\sigma^2} \\
	\frac{2L^2\alpha}{\mu}&  1-\frac{\mu\alpha}{2} & 0 \\
	\frac{100L^2}{1-\sigma^2} & \frac{70L^2}{1-\sigma^2} & 
	\frac{1+\sigma^2}{2} + \frac{40L^2\alpha^2}{1-\sigma^2}
	\end{array}
	\right]}_{\triangleq G_\alpha}
	\underbrace{\left[
	\begin{array}{l}
	\mathbb{E}\left[\left\|\mb{x}^{t,k}-\mb{1}_n\ol{\mb{x}}^{t,k}\right\|^2\right] \\
	\mathbb{E}\left[n\left\|\ol{\mb x}^{t,k}-\mb{x}^*\right\|^2\right] \\
	\mathbb{E}\left[\left\|\mb{y}^{t,k} - \mb{1}\ol{\mb{y}}^{t,k}\right\|^2\right]
	\end{array}
	\right]}_{\mb{u}^{t,k}} \nonumber\\
	&+\underbrace{\left[
	\begin{array}{cccc}
	0 & 0 & 0 \\
	4L^2\alpha^2&  4L^2\alpha^2 & 0 \\
	\frac{60L^2}{1-\sigma^2} & \frac{60L^2}{1-\sigma^2} & 0
	\end{array}
	\right]}_{\triangleq H_\alpha}
	\underbrace{\left[
	\begin{array}{l}
	\mathbb{E}\left[\left\|\mb{x}^{t,0}-\mb{1}_n\ol{\mb{x}}^{t,0}\right\|^2\right] \\
	\mathbb{E}\left[n\left\|\ol{\mb x}^{t,0}-\mb{x}^*\right\|^2\right] \\
	\mathbb{E}\left[\left\|\mb{y}^{t,0} - \mb{1}\ol{\mb{y}}^{t,0}\right\|^2\right]
	\end{array}
	\right]}_{\mb{u}^{t,0}}.
\end{align*}	
Now we consider the convergence of the above matrix-vector recursion, i.e.,~$\forall t\geq0$ and for~$0\leq k\leq K-1$,
\begin{align}
\mb{u}^{t,k+1} \leq G_\alpha\mb{u}^{t,k} + H_{\alpha}\mb{u}^{t,0}. 
\end{align}	
Applying the above inequality recursively over~$k$ from~$0$ to~$K-1$, we have the following:
\begin{align}\label{recursion_0}
\mb{u}^{t+1,0} 
=\mb{u}^{t,K}
\leq G_{\alpha}\mb{u}^{t,K-1} + H_{\alpha}\mb{u}^{t,0}
\leq \left(G_\alpha^K+\sum_{r=1}^{K-1}G_{\alpha}^{r}H_\alpha\right)\mb{u}^{t,0}.
\end{align} 
where we used the non-negativity of the matrices~$G_\alpha$ and~$H_\alpha$. The following theorem is then straightforward:
\begin{theorem}
	If the step-size~$\alpha$ and the length of inner loop~$K$ are chosen such that~$
	\rho\left(G_\alpha^K+\sum_{r=1}^{K-1}G_{\alpha}^{r}H\right)<1$,
	then~\textbf{\texttt{GT-SVRG}} achieves linear convergence in the outer loop.
\end{theorem}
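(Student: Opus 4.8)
The statement follows almost immediately from the matrix-vector recursion already assembled in~\eqref{recursion_0}, so the plan is short. Denote by $A_{\alpha,K}\triangleq G_\alpha^K+\sum_{r=1}^{K-1}G_\alpha^{r}H_\alpha$ the nonnegative matrix appearing there; it is entrywise nonnegative because $G_\alpha$ and $H_\alpha$ are, and the entries of $\mb{u}^{t,0}$ are expected squared norms, hence $\mb{u}^{t,0}\geq\bfzero$ for every $t$. The first step is to iterate $\mb{u}^{t+1,0}\leq A_{\alpha,K}\mb{u}^{t,0}$ down to the initialization: since a nonnegative matrix is monotone with respect to the entrywise order ($\mb{a}\leq\mb{b}\Rightarrow A_{\alpha,K}\mb{a}\leq A_{\alpha,K}\mb{b}$), applying the recursion $t$ times gives $\mb{u}^{t,0}\leq A_{\alpha,K}^{\,t}\,\mb{u}^{0,0}$ for all $t\geq 0$.

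The second step converts the hypothesis $\rho(A_{\alpha,K})<1$ into a geometric bound on the powers $A_{\alpha,K}^{\,t}$. I would fix any $\gamma$ with $\rho(A_{\alpha,K})<\gamma<1$ and invoke Gelfand's spectral-radius formula (equivalently, pick a submultiplicative norm in which $\|A_{\alpha,K}\|\leq\gamma$) to get a finite constant $C>0$ with $\|A_{\alpha,K}^{\,t}\|\leq C\gamma^{t}$ for all $t$. Combined with the previous step and the equivalence of norms on $\mbb{R}^{3}$, this yields $\|\mb{u}^{t,0}\|_1\leq C'\gamma^{t}\|\mb{u}^{0,0}\|_1$ for some $C'>0$, so each coordinate of $\mb{u}^{t,0}$ --- in particular the consensus error $\mathbb{E}[\|\mb{x}^{t,0}-\mb{1}_n\ol{\mb{x}}^{t,0}\|^2]$ and the optimality gap $\mathbb{E}[\|\ol{\mb{x}}^{t,0}-\mb{x}^*\|^2]$ --- is $\mc{O}(\gamma^{t})$. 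The last step is a triangle-inequality cleanup: $\|\mb{x}_i^{t,0}-\mb{x}^*\|^2\leq 2\|\mb{x}^{t,0}-\mb{1}_n\ol{\mb{x}}^{t,0}\|^2+2\|\ol{\mb{x}}^{t,0}-\mb{x}^*\|^2$ shows every node's outer-loop estimate converges to $\mb{x}^*$ at a linear rate, which is precisely the claimed linear convergence of the outer loop.

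I do not expect a real obstacle here --- all of the analytic content is in Lemmas~\ref{consensus_er_svrg}, \ref{opt_er_svrg} and~\ref{track_er_svrg}, which produce the nonnegative system matrix, and the present theorem is just the standard observation that a nonnegative linear iteration driven by a matrix of spectral radius below one decays geometrically; the single external fact used (subunit spectral radius implies geometric decay of powers) is textbook. The genuinely laborious part, which I would carry out next on the way to Theorem~\ref{main_theorem}, is the \emph{quantitative} one: exhibiting an explicit step-size range and an inner-loop length $K=\mc{O}(Q^2/(1-\sigma)^2)$ for which $\rho(A_{\alpha,K})<1$ --- e.g.\ by arranging $\rho(G_\alpha)$ strictly below one so that $G_\alpha^{K}$ is driven small by taking $K$ large, while controlling $\|H_\alpha\|$ and the perturbation series $\sum_{r\geq1}G_\alpha^{r}H_\alpha$ so that the whole sum still has spectral radius below one.
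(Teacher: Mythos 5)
Your proposal is correct and follows essentially the same route as the paper, which simply declares the theorem ``straightforward'' from the recursion~$\mb{u}^{t+1,0}\leq\left(G_\alpha^K+\sum_{r=1}^{K-1}G_{\alpha}^{r}H_\alpha\right)\mb{u}^{t,0}$ established just before the statement. You have merely written out the standard argument the authors leave implicit --- nonnegativity and monotonicity of the system matrix, then spectral radius below one giving geometric decay of its powers, then a triangle inequality to pass to per-node convergence --- so there is nothing to add.
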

 Now, we derive the complexity of~\textbf{\texttt{GT-SVRG}} in terms of the total number of local component gradient computations to reach~$\epsilon$-optimal solution, under a specific choice of~$\alpha$ and~$K$.
To do this, we first find the range of the step-size~$\alpha$ such that~$\rho(G_\alpha)<1$, with the help of the following lemma from~\cite{matrix_analysis}.
\begin{lem}\label{rho_bound}
	Let~$A\in\mathbb{R}^{d\times d}$ be a non-negative matrix and~$\mb{x}\in\mathbb{R}^d$ be a positive vector. If~$A\mb{x}\leq\beta\mb{x}$ for~$\beta>0$, then~$\rho(A)\leq\beta$. If~$A\mb{x}<\gamma\mb{x}$ for~$\gamma>0$, then~$\rho(A)<\gamma$.  
\end{lem}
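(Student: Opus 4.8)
The plan is to establish this classical ``subinvariance'' bound on the spectral radius of a non-negative matrix, which I would do by a direct iteration argument (with the Perron--Frobenius theorem available as a quick alternative).

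First I would iterate the hypothesis. Since $A\ge\bfzero$ entrywise, the map $\mb{v}\mapsto A\mb{v}$ is monotone on the non-negative orthant, so from $A\mb{x}\le\beta\mb{x}$ we get $A^2\mb{x}\le\beta A\mb{x}\le\beta^2\mb{x}$ and, by induction, $A^k\mb{x}\le\beta^k\mb{x}$ for all $k\ge1$. Next I would use that $\mb{x}$ is \emph{positive}: with $x_{\min}\triangleq\min_i x_i>0$ and $x_{\max}\triangleq\max_i x_i$, one has the entrywise bounds $\mb{e}_j\le(1/x_{\min})\mb{x}$ and $\mb{x}\le x_{\max}\mb{1}_d$ for each coordinate vector $\mb{e}_j$. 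Then $A^k\ge\bfzero$ gives $A^k\mb{e}_j\le(1/x_{\min})A^k\mb{x}\le(\beta^k/x_{\min})\mb{x}\le(x_{\max}/x_{\min})\beta^k\mb{1}_d$, so every entry of $A^k$ is at most $(x_{\max}/x_{\min})\beta^k$; by equivalence of norms, $\|A^k\|\le c\,\beta^k$ for a constant $c=c(d,\mb{x})$. Gelfand's formula $\rho(A)=\lim_{k\to\infty}\|A^k\|^{1/k}$ then yields $\rho(A)\le\beta$, which is the first claim.

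For the strict part I would reduce to the first: since $A\mb{x}<\gamma\mb{x}$ coordinatewise and there are finitely many coordinates, $\beta\triangleq\max_i (A\mb{x})_i/x_i$ is well defined, satisfies $\beta<\gamma$, and still obeys $A\mb{x}\le\beta\mb{x}$; hence $\rho(A)\le\beta<\gamma$.

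I do not expect a genuine obstacle here. The only delicate points are that $x_{\min}>0$ — which is precisely where positivity rather than mere non-negativity of $\mb{x}$ enters — and the appeal to Gelfand's limit formula to pass from the entrywise growth of $A^k$ to a bound on $\rho(A)$. If one wishes to bypass Gelfand's formula, the shortest alternative is Perron--Frobenius: it provides a nonzero left eigenvector $\mb{w}\ge\bfzero$ with $\mb{w}^\top A=\rho(A)\mb{w}^\top$, so $\rho(A)\,\mb{w}^\top\mb{x}=\mb{w}^\top(A\mb{x})\le\beta\,\mb{w}^\top\mb{x}$; since $\mb{w}\ge\bfzero$, $\mb{w}\ne\bfzero$, and $\mb{x}>\bfzero$ force $\mb{w}^\top\mb{x}>0$, dividing gives $\rho(A)\le\beta$ (and $<\gamma$ in the second case after the same reduction). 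Either way, the lemma as stated in~\cite{matrix_analysis} follows.
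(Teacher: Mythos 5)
Your proof is correct. Note, however, that the paper does not actually prove this lemma: it is quoted verbatim as a known result from the reference \cite{matrix_analysis} (it is essentially Horn and Johnson's bound $\rho(A)\leq\max_i (A\mb{x})_i/x_i$ for non-negative $A$ and positive $\mb{x}$), so there is no in-paper argument to compare against. Both routes you sketch are standard and sound: the iteration-plus-Gelfand argument correctly uses positivity of $\mb{x}$ to convert $A^k\mb{x}\leq\beta^k\mb{x}$ into an entrywise bound on $A^k$, and the left Perron eigenvector argument is the textbook one-line proof. The only microscopic caveat is in your reduction for the strict case: $\beta=\max_i(A\mb{x})_i/x_i$ could equal $0$ (when $A=\bfzero$), which falls outside the stated hypothesis $\beta>0$; this is harmless since $\rho(\bfzero)=0<\gamma$, or one can replace $\beta$ by $\max\{\beta,\gamma/2\}$. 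No genuine gap.
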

\begin{lem}\label{rho}
If~$0<\alpha\leq\frac{\left(1-\sigma^2\right)^2}{105QL}$, then~$\rho\left(G_\alpha\right)<1$.
\end{lem}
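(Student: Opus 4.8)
The plan is to invoke Lemma~\ref{rho_bound}: since $G_\alpha$ is entrywise non-negative, it suffices to exhibit a strictly positive vector $\mb{x}=(x_1,x_2,x_3)^\top$ satisfying $G_\alpha\mb{x}<\mb{x}$ componentwise, which then forces $\rho(G_\alpha)<1$. Motivated by the meaning of the three rows of $G_\alpha$ (consensus error, optimality gap, and gradient-tracking error, respectively), I would search for a test vector of the form $x_1=1$, $x_2=c_2Q^2$, $x_3=c_3L^2Q^2/(1-\sigma^2)^2$, with absolute constants $c_2,c_3>0$ to be pinned down at the very end.

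Next I would write out $G_\alpha\mb{x}<\mb{x}$ row by row. The first (consensus) inequality is $\tfrac{1+\sigma^2}{2}+\tfrac{2\alpha^2}{1-\sigma^2}x_3<1$, i.e. $x_3<\tfrac{(1-\sigma^2)^2}{4\alpha^2}$; inserting the hypothesis $\alpha\le(1-\sigma^2)^2/(105QL)$ converts this into an \emph{upper} bound $c_3<105^2/4$. The second (optimality) inequality, $\tfrac{2L^2\alpha}{\mu}+\big(1-\tfrac{\mu\alpha}{2}\big)c_2Q^2<c_2Q^2$, simplifies---after cancelling $\alpha$ and using $Q=L/\mu$---to $c_2>4$, and imposes no restriction on the step-size whatsoever. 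For the third (tracking-error) inequality I would first use the step-size bound together with $Q\ge1$ and $\sigma<1$ to tame the perturbation entry, $\tfrac{40L^2\alpha^2}{1-\sigma^2}\le\tfrac{40}{105^2}(1-\sigma^2)<\tfrac{1-\sigma^2}{4}$, so that the diagonal entry of the third row is at most $\tfrac{1+\sigma^2}{2}+\tfrac{1-\sigma^2}{4}=1-\tfrac{1-\sigma^2}{4}$; the inequality then collapses to $\tfrac{100L^2}{1-\sigma^2}+\tfrac{70c_2L^2Q^2}{1-\sigma^2}<\tfrac{1-\sigma^2}{4}\cdot\tfrac{c_3L^2Q^2}{(1-\sigma^2)^2}$, which, using $Q\ge1$ and $c_2=5$, becomes a \emph{lower} bound $c_3>1800$.

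What remains is routine constant-chasing, and the one genuinely delicate point is that $x_3$ is \emph{sandwiched}: the tracking-error row pushes $x_3$ up, whereas the consensus row---whose off-diagonal entry $2\alpha^2/(1-\sigma^2)$ feeds the gradient-tracking error back into the consensus error and scales with $\alpha^2$---pushes $x_3$ down. The crux is to check that the constant $105$ in the step-size bound leaves a nonempty window, here $1800<c_3<105^2/4=2756.25$; taking, for instance, $c_2=5$ and $c_3=2000$, i.e. $\mb{x}=\big(1,\ 5Q^2,\ 2000\,L^2Q^2/(1-\sigma^2)^2\big)^\top$, makes all three strict inequalities simultaneously hold for every $\alpha\in(0,(1-\sigma^2)^2/(105QL)]$. (One should also note $\alpha\le(1-\sigma^2)^2/(105QL)\le 1/(8QL)$, so that the matrix recursion defining $G_\alpha$---hence all the lemmas feeding into it---is in force over this range.) With such an $\mb{x}$ in hand, Lemma~\ref{rho_bound} immediately yields $\rho(G_\alpha)<1$.
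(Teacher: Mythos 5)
Your proposal is correct and follows essentially the same route as the paper: both invoke Lemma~\ref{rho_bound} with a positive test vector scaled as $\bigl(1,\ \Theta(Q^2),\ \Theta(Q^2L^2/(1-\sigma^2)^2)\bigr)$ and then chase constants row by row (the paper takes $\bds\epsilon=[1,\,8Q^2,\,1350Q^2L^2/(1-\sigma^2)^2]^\top$, you take $[1,\,5Q^2,\,2000L^2Q^2/(1-\sigma^2)^2]^\top$, a cosmetic difference). Your verification that the sandwich $1800<c_3<105^2/4$ is nonempty is the same calculation the paper performs in deriving \eqref{a1} and \eqref{a3}, just presented in the reverse order (fixing the step-size bound first and solving for the vector, rather than fixing the vector and solving for the step-size).
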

\begin{proof}
In the light of Lemma~\ref{rho_bound}, we solve for a positive constant~$\gamma\geq1$ and a positive vector~$\bds\epsilon= [\epsilon_1,\epsilon_2,\epsilon_3]^\top$ such that ~$G_\alpha\bds\epsilon \leq \left(1-\frac{1}{\gamma}\right)\bds\epsilon$ holds.
This inequality can be expanded as follows:
\begin{align}
\frac{1+\sigma^2}{2}\epsilon_1 + \frac{2\alpha^2}{1-\sigma^2}\epsilon_3 <&~ \left(1-\frac{1}{\gamma}\right)\epsilon_1.
\label{r1}\\
\frac{2L^2\alpha}{\mu}\epsilon_1 + \left(1-\frac{\mu\alpha}{2}\right)\epsilon_2 
 <&~ \left(1-\frac{1}{\gamma}\right)\epsilon_2.
\label{r2}\\
\frac{100L^2}{1-\sigma^2}\epsilon_1 
+ \frac{70L^2}{1-\sigma^2}\epsilon_2
+ \frac{1+\sigma^2}{2}\epsilon_3
+ \frac{40L^2\alpha^2}{1-\sigma^2}\epsilon_3 <&~ \left(1-\frac{1}{\gamma}\right)\epsilon_3.\label{r3}
\end{align}
The above inequalities~\eqref{r1}-\eqref{r3} can written equivalently as follows:
\begin{align}
\frac{1}{\gamma}\leq&~\frac{1-\sigma^2}{2} - \frac{2\alpha^2}{1-\sigma^2}\frac{\epsilon_3}{\epsilon_1}
\label{r1'}\\
\frac{1}{\gamma}\leq&~
\frac{\mu\alpha}{2} - \frac{2L^2\alpha}{\mu}\frac{\epsilon_1}{\epsilon_2} \label{r2'}\\
\frac{1}{\gamma}\leq&~
\frac{1-\sigma^2}{2}
-\frac{100L^2}{1-\sigma^2}\frac{\epsilon_1}{\epsilon_3}
- \frac{70L^2}{1-\sigma^2}\frac{\epsilon_2}{\epsilon_3}
- \frac{40L^2\alpha^2}{1-\sigma^2}\label{r3'}
\end{align}
It can be observed that if the RHS of~\eqref{r1'}-\eqref{r3'} is positive, then we can always find a sufficiently large~$\gamma>1$ such that~\eqref{r1'}-\eqref{r3'} hold. The RHS of~\eqref{r2'} being positive is equivalent to the following:
\begin{align}\label{e12}
\frac{\epsilon_1}{\epsilon_2} < \frac{1}{4Q^2}.
\end{align}  
According to~\eqref{e12}, we set~$$\epsilon_1=1,\qquad\epsilon_2=8Q^2.$$ Towards the RHS of~\eqref{r3'}, we note that if
\begin{align}\label{e3}
\frac{1-\sigma^2}{2}
-\frac{100L^2}{1-\sigma^2}\frac{1}{\epsilon_3}
- \frac{560L^2}{1-\sigma^2}\frac{Q^2}{\epsilon_3} > 0 
\impliedby
\frac{1-\sigma^2}{2}
- \frac{660L^2}{1-\sigma^2}\frac{Q^2}{\epsilon_3} > 0
\end{align}
we can always set~$\alpha$ to be sufficiently small such that the RHS of~\eqref{r3'} is positive. According to~\eqref{e3}, we set
$$\epsilon_3 = \frac{1350Q^2L^2}{(1-\sigma^2)^2}.$$
Now we use the values of~$\epsilon_1,\epsilon_2,\epsilon_3$ in the RHS of~\eqref{r1'}-\eqref{r3'}. For the RHS of~\eqref{r1'} to be positive,
\begin{align}\label{a1}
\alpha^2 < \frac{\left(1-\sigma^2\right)^4}{5400Q^2L^2}
\iff \alpha < \frac{\left(1-\sigma^2\right)^2}{30\sqrt{6}QL}.
\end{align}
For the RHS of~\eqref{r3'} to be positive,
\begin{align}\label{a3}
 \frac{40L^2\alpha^2}{1-\sigma^2}
 <\frac{1-\sigma^2}{2}
 -\frac{100L^2}{1-\sigma^2}\frac{\epsilon_1}{\epsilon_3}
 - \frac{70L^2}{1-\sigma^2}\frac{\epsilon_2}{\epsilon_3} 
 \impliedby
 \frac{40L^2\alpha^2}{1-\sigma^2}
 < \frac{15(1-\sigma^2)}{1350}
\iff \alpha < \frac{3(1-\sigma^2)}{\sqrt{10800}L}.
\end{align}
Therefore, from~\eqref{a1} and~\eqref{a3}, we have that:
$$
\mbox{If}~0<\alpha\leq\frac{\left(1-\sigma^2\right)^2}{105QL},\qquad\rho\left(G_\alpha\right)<1,
$$
which completes the proof.
\end{proof}

Based on Lemma~\ref{rho}, if~$0<\alpha\leq\frac{(1-\sigma^2)^2}{105QL}$, then~$\rho(G_\alpha)<1$, and therefore the following holds:~$\forall t\geq0$ and~$K\geq1$,
\begin{align}\label{recursion}
\mb{u}^{t+1,0} 
\leq \left(G_\alpha^K+\sum_{r=1}^{K-1}G_{\alpha}^{r}H_\alpha\right)\mb{u}^{t,0}
\leq \left(G_\alpha^K+\sum_{r=1}^{\infty}G_{\alpha}^{r}H_\alpha\right)\mb{u}^{t,0}
=\left(G_\alpha^K+\left(I-G_\alpha\right)^{-1}H_\alpha\right)\mb{u}^{t,0},
\end{align} 
To proceed, for a positive vector~$\mb{w}\in\mathbb{R}^d$, we define a weighted matrix norm~$\mn{\cdot}_\infty^{\mb{w}}$, i.e.,~$\mn{A}_\infty^{\mb{w}} = \max_{i}\frac{\sum_{j=1}^{d}|a_{i,j}|w_j}{w_i}$ for~$A=\{a_{i,j}\}\in\mathbb{R}^{d\times d}$, where~$w_i$ is the~$i$th entry of~$\mb{w}$~\cite{bertsekas_PDC_book}. Next, we bound~$\left(I-G_\alpha\right)^{-1}H_\alpha$ under an appropriate matrix norm with the help of the following two lemmas from~\cite{matrix_analysis} and~\cite{bertsekas_PDC_book}.
\begin{lem}\label{rho_monotone}
Let~$X\in\mathbb{R}^{d\times d}$ and~$Y\in\mathbb{R}^{d\times d}$ be some non-negative matrices. If~$X\leq Y$, then~$\rho(X)\leq\rho(Y)$.
\end{lem}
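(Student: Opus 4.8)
The plan is to reduce the statement to Gelfand's spectral-radius formula combined with the monotonicity, on the cone of entrywise non-negative matrices, of a conveniently chosen matrix norm. First I would establish, by induction on $k\geq 1$, that $\mb{0}\leq X^k\leq Y^k$ entrywise. The base case $k=1$ is the hypothesis $\mb{0}\leq X\leq Y$. For the inductive step, write $(X^{k+1})_{ij}=\sum_{\ell=1}^d X_{i\ell}(X^k)_{\ell j}$; since all factors are non-negative and are dominated termwise by the corresponding factors of $Y$, i.e. $X_{i\ell}\leq Y_{i\ell}$ and $(X^k)_{\ell j}\leq (Y^k)_{\ell j}$, each summand satisfies $X_{i\ell}(X^k)_{\ell j}\leq Y_{i\ell}(Y^k)_{\ell j}$, hence $(X^{k+1})_{ij}\leq(Y^{k+1})_{ij}$.

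Next I would invoke the induced $\ell_\infty$ matrix norm $\|A\|_\infty=\max_{1\leq i\leq d}\sum_{j=1}^d|a_{ij}|$. For any pair of non-negative matrices with $A\leq B$ entrywise one has $\|A\|_\infty=\max_i\sum_j A_{ij}\leq\max_i\sum_j B_{ij}=\|B\|_\infty$; applying this to $A=X^k$ and $B=Y^k$ yields $\|X^k\|_\infty\leq\|Y^k\|_\infty$ for every $k\geq 1$. Finally, Gelfand's formula states that for any matrix $A$ and any norm on $\mathbb{R}^{d\times d}$, $\rho(A)=\lim_{k\to\infty}\|A^k\|^{1/k}$; raising the previous inequality to the power $1/k$ (using that $t\mapsto t^{1/k}$ is increasing on $[0,\infty)$) and letting $k\to\infty$ gives $\rho(X)\leq\rho(Y)$.

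The argument is essentially routine and I do not expect a genuine obstacle. The only point requiring care is the choice of norm: one needs a norm that is monotone with respect to the entrywise order on non-negative matrices, which rules out the spectral ($\ell_2$) norm but is immediate for the induced $\ell_1$ or $\ell_\infty$ norms (and even for the entrywise max norm, for which Gelfand's formula still applies since it is norm-independent). An alternative route would appeal to Perron--Frobenius theory directly --- $\rho(Y)$ is an eigenvalue of $Y$ with a non-negative eigenvector --- but for a possibly reducible $Y$ that eigenvector need not be strictly positive, so one would first perturb $Y$ to $Y+\varepsilon\mb{1}_d\mb{1}_d^\top$, apply Lemma~\ref{rho_bound}, and let $\varepsilon\downarrow 0$; the Gelfand argument avoids this technicality and is what I would write up.
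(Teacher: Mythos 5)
Your proof is correct: the entrywise induction $\mb{0}\leq X^k\leq Y^k$, the monotonicity of the induced $\ell_\infty$ norm on non-negative matrices, and Gelfand's formula together give $\rho(X)\leq\rho(Y)$ with no gaps. The paper itself offers no proof of this lemma --- it is quoted as a standard fact from~\cite{matrix_analysis} --- and your argument is precisely the canonical one found there, so there is nothing to reconcile.
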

\begin{lem}\label{rho_bound_n}
Let~$A\in\mathbb{R}^{d\times d}$ be a non-negative matrix with~$\rho\left(A\right)<1$. $\forall\epsilon>0$, there exists a positive vector~$\mb{w}\in\mathbb{R}^d$ such that~$\rho\left(X\right)\leq\mn{X}_\infty^{\mb{w}}\leq\rho\left(X\right) + \epsilon$.
\end{lem}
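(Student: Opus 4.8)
The statement asserts that for a non-negative $A\in\mathbb{R}^{d\times d}$ and any $\epsilon>0$ one can choose a positive vector $\mb{w}$ with $\rho(A)\le\mn{A}_\infty^{\mb{w}}\le\rho(A)+\epsilon$ (the hypothesis $\rho(A)<1$ will turn out to be inessential). My plan is to handle the two inequalities separately. The lower bound holds for \emph{every} positive $\mb{w}$: since $A\ge0$ we have $\mn{A}_\infty^{\mb{w}}=\max_i\frac{\sum_j a_{ij}w_j}{w_i}$, hence $A\mb{w}\le\mn{A}_\infty^{\mb{w}}\,\mb{w}$ entrywise, and Lemma~\ref{rho_bound} applied with $\mb{x}=\mb{w}$ and $\beta=\mn{A}_\infty^{\mb{w}}$ (the case $A=\mb{0}$ being immediate) yields $\rho(A)\le\mn{A}_\infty^{\mb{w}}$. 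Thus the only real task is to exhibit \emph{one} positive $\mb{w}$ for which $\mn{A}_\infty^{\mb{w}}$ does not exceed $\rho(A)+\epsilon$.

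To build such a $\mb{w}$, fix $\epsilon>0$ and set $c:=\rho(A)+\epsilon>\rho(A)\ge0$. Since $\rho(c^{-1}A)=\rho(A)/c<1$, the Neumann series $\sum_{k\ge0}c^{-(k+1)}A^k$ converges and equals $(cI-A)^{-1}$; moreover, because $A\ge0$, every partial sum is entrywise non-negative, with the $k=0$ term alone contributing $c^{-1}I$. I would therefore take $\mb{w}:=(cI-A)^{-1}\mb{1}_d$, where $\mb{1}_d$ is the all-ones vector: the previous observation gives $\mb{w}\ge c^{-1}\mb{1}_d>0$, so $\mb{w}$ is a legitimate positive vector. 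From $(cI-A)\mb{w}=\mb{1}_d$ one reads off $A\mb{w}=c\mb{w}-\mb{1}_d$, i.e. $\sum_j a_{ij}w_j=c\,w_i-1<c\,w_i$ for every $i$; dividing by $w_i>0$ and maximizing over $i$ gives $\mn{A}_\infty^{\mb{w}}=\max_i\big(c-\tfrac{1}{w_i}\big)\le c=\rho(A)+\epsilon$. Combining with the lower bound of the first paragraph completes the argument.

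There is essentially no hard step here; the points requiring care are merely the convergence and entrywise non-negativity of the Neumann series — this is exactly where $c>\rho(A)$ is used, and it shows the stated hypothesis $\rho(A)<1$ is not actually needed — together with the strict positivity of $\mb{w}$, which is secured by the $k=0$ term. If one prefers to avoid the Neumann series, an alternative route is to apply the Perron–Frobenius theorem to the strictly positive matrix $A_\delta:=A+\delta\,\mb{1}_d\mb{1}_d^\top$, take its positive Perron eigenvector $\mb{w}_\delta$, note $A\mb{w}_\delta\le A_\delta\mb{w}_\delta=\rho(A_\delta)\mb{w}_\delta$ so that $\mn{A}_\infty^{\mb{w}_\delta}\le\rho(A_\delta)$, and then invoke continuity of the spectral radius (with Lemma~\ref{rho_monotone} giving $\rho(A)\le\rho(A_\delta)$) to choose $\delta$ small enough that $\rho(A_\delta)\le\rho(A)+\epsilon$.
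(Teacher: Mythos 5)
The paper does not actually prove this lemma: it is imported verbatim from the cited references \cite{matrix_analysis,bertsekas_PDC_book}, so there is no in-paper argument to compare against. Your proof is correct and self-contained. The lower bound $\rho(A)\le\mn{A}_\infty^{\mb{w}}$ for every positive $\mb{w}$ is the standard domination of the spectral radius by an induced norm, and your reduction to Lemma~\ref{rho_bound} via $A\mb{w}\le\mn{A}_\infty^{\mb{w}}\,\mb{w}$ (with the trivial case $A=\mathbf{0}$ set aside) is sound, since for $A\neq\mathbf{0}$ and $\mb{w}>0$ one has $\mn{A}_\infty^{\mb{w}}>0$. For the upper bound, the choice $\mb{w}=(cI-A)^{-1}\mb{1}_d$ with $c=\rho(A)+\epsilon$ works exactly as you say: the Neumann series gives entrywise non-negativity of $(cI-A)^{-1}$, its $k=0$ term guarantees $\mb{w}\ge c^{-1}\mb{1}_d>0$, and $A\mb{w}=c\mb{w}-\mb{1}_d<c\mb{w}$ yields $\mn{A}_\infty^{\mb{w}}\le c$. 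This is a legitimate alternative to the usual textbook derivation (perturb $A$ to a strictly positive matrix, take its Perron vector, and invoke continuity of the spectral radius --- essentially the second route you sketch); the Neumann-series argument has the advantage of being fully constructive and of avoiding both Perron--Frobenius and any continuity argument. Your observation that the hypothesis $\rho(A)<1$ is not needed is also correct; it is only relevant to how the lemma is applied later in the paper. (Minor point: the statement as printed writes $\rho(X)$ and $\mn{X}_\infty^{\mb{w}}$ where it clearly means $A$; you silently and correctly fixed this typo.)
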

\begin{lem}
If~$\alpha = \frac{(1-\sigma^2)^2}{200QL}$, there exists a weighted matrix norm~$\mn{\cdot}_\infty^{\mb{q}}$ such that
\begin{align*}
\mn{\left(I-G_\alpha\right)^{-1}H_\alpha}_\infty^{\mb{q}} \leq 0.85.
\end{align*}
\end{lem}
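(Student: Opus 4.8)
The plan is to exhibit an explicit positive weight vector $\mb{q}=[q_1,q_2,q_3]^\top$ for which $(I-G_\alpha)^{-1}H_\alpha\,\mb{q}\le 0.85\,\mb{q}$ holds entrywise; since for the chosen step-size $(I-G_\alpha)^{-1}H_\alpha$ is a non-negative matrix, this entrywise inequality is exactly the statement $\mn{(I-G_\alpha)^{-1}H_\alpha}_\infty^{\mb{q}}\le 0.85$, by the definition of the weighted norm. First I would record that $\alpha=\frac{(1-\sigma^2)^2}{200QL}\le\frac{(1-\sigma^2)^2}{105QL}$, so Lemma~\ref{rho} gives $\rho(G_\alpha)<1$; since $G_\alpha\ge0$ entrywise, the Neumann series $(I-G_\alpha)^{-1}=\sum_{k\ge0}G_\alpha^k$ converges to a non-negative matrix, hence both $(I-G_\alpha)^{-1}$ and $(I-G_\alpha)^{-1}H_\alpha$ are entrywise non-negative.

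Next I would reduce the target to something checkable by hand. Because $(I-G_\alpha)^{-1}\ge0$ entrywise, if $\mb{q}>0$ satisfies $H_\alpha\mb{q}\le 0.85\,(I-G_\alpha)\mb{q}$, then left-multiplying by $(I-G_\alpha)^{-1}$ preserves the inequality and yields $(I-G_\alpha)^{-1}H_\alpha\mb{q}\le 0.85\,\mb{q}$. Rearranging, it therefore suffices to find a positive $\mb{q}$ with
\begin{equation*}
\big(0.85\,G_\alpha+H_\alpha\big)\mb{q}\;\le\;0.85\,\mb{q},
\end{equation*}
which is a system of three scalar inequalities in $q_1,q_2,q_3$. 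To set them up I would substitute the specific step-size and simplify using $\delta\triangleq1-\sigma^2\in(0,1]$ and $Q=L/\mu\ge1$: one gets $\mu\alpha=\frac{\delta^2}{200Q^2}$, $L^2\alpha^2=\frac{\delta^4}{40000Q^2}$, $\frac{2L^2\alpha}{\mu}=\frac{\delta^2}{100}$, $\frac{40L^2\alpha^2}{1-\sigma^2}=\frac{\delta^3}{1000Q^2}$, and $\frac{2\alpha^2}{1-\sigma^2}=\frac{\delta^3}{20000Q^2L^2}$.

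Then I would try the choice
\begin{equation*}
\mb{q}=\Big[\,1,\;\;5Q^2,\;\;\frac{2000\,Q^2L^2}{(1-\sigma^2)^2}\,\Big]^\top,
\end{equation*}
and verify the three rows. Using $\tfrac{1+\sigma^2}{2}=1-\tfrac{\delta}{2}$ throughout, the first row reduces to $q_3\le\frac{(1-\sigma^2)^2}{4\alpha^2}q_1=\frac{10000\,Q^2L^2}{\delta^2}$, which holds with a factor-of-five margin; the second row, after dividing through by $\alpha$ and bounding the $O(L^2\alpha)$ terms by $O(QL)$ via $\delta\le1,Q\ge1$, reduces to $1.82\,QL\le 2.125\,QL$; and the third row, after absorbing the $\tfrac{\delta^3}{1000Q^2}$ term, reduces to $744.2\,Q^2\le 850\,Q^2$. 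All three hold, which proves the claim; the constants $5$ and $2000$ are not tight and a range of values works.

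The main obstacle is the joint feasibility of the three inequalities: the first row caps $q_3/q_1$ from above (this is the price of feeding the perturbation $\tfrac{2\alpha^2}{1-\sigma^2}$ into the consensus-error row), while the third row forces $q_3$ from below (it must dominate the large $\tfrac{L^2}{1-\sigma^2}$ coefficients in the gradient-tracking-error row), so one must check that the resulting lower bound, roughly $1750\,Q^2L^2/\delta^2$, stays below the cap $10000\,Q^2L^2/\delta^2$. That headroom is exactly what is bought by taking $\alpha=\frac{(1-\sigma^2)^2}{200QL}$ strictly below the threshold $\frac{(1-\sigma^2)^2}{105QL}$ of Lemma~\ref{rho}: shrinking $\alpha$ shrinks $H_\alpha$ and the off-diagonal entries of $G_\alpha$ faster than it shrinks the useful diagonal gaps in $I-G_\alpha$, which is what pushes the contraction factor down to $0.85<1$. (Alternatively one could bound $\rho\big((I-G_\alpha)^{-1}H_\alpha\big)$ directly and invoke Lemma~\ref{rho_monotone} and Lemma~\ref{rho_bound_n}, but constructing $\mb{q}$ explicitly is cleaner and self-contained.)
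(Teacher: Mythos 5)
Your proof is correct, and it takes a genuinely different route from the paper's. The paper proceeds non-constructively: it bounds $\left(I-G_\alpha\right)^{-1}$ entrywise by lower-bounding $\det(I-G_\alpha)$ and upper-bounding the adjugate, observes that the resulting majorant $J_\alpha H_\alpha$ has rank one so that its spectral radius equals its trace (evaluated to $0.848$ at the given step-size), and then invokes Lemma~\ref{rho_monotone} together with the existence result Lemma~\ref{rho_bound_n} to produce an unspecified weight vector $\mb{q}$ achieving $0.848+0.002=0.85$. You instead bypass the matrix inversion entirely: using only $\left(I-G_\alpha\right)^{-1}\geq0$ (from the Neumann series, valid since $\frac{(1-\sigma^2)^2}{200QL}\leq\frac{(1-\sigma^2)^2}{105QL}$ puts you inside the range of Lemma~\ref{rho}), you reduce the claim to the entrywise inequality $H_\alpha\mb{q}\leq0.85\left(I-G_\alpha\right)\mb{q}$ and verify it for the explicit vector $\mb{q}=[1,\,5Q^2,\,2000Q^2L^2/(1-\sigma^2)^2]^\top$; I checked the three rows ($2000\leq10000$, $1.82\,QL\leq2.125\,QL$, and $744.2\,Q^2\leq850\,Q^2$) and they all hold with room to spare. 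Your argument is more elementary and self-contained --- no determinant, no adjugate, no appeal to Lemma~\ref{rho_bound_n} --- and it has the practical advantage of making $\mb{q}$ explicit, which is the same (otherwise non-constructive) norm that reappears in Theorem~\ref{cov_outer}; the paper's version buys a marginally sharper handle on the spectral radius itself and reuses the Lemma~\ref{rho_monotone}/\ref{rho_bound_n} machinery already set up for $G_\alpha$. Both establish the stated bound.
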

\begin{proof}
We first derive an entry-wise upper bound for~$\left(I-G_\alpha\right)^{-1}= \frac{\operatorname{adj}\left(I-G_\alpha\right)}{\det(I-G_\alpha)}$, where~$\operatorname{adj}\left(I-G_\alpha\right)$ is the adjugate matrix of~$I-G_\alpha$. We note that 
\begin{align}
I-G_\alpha = \left[
\begin{array}{cccc}
\frac{1-\sigma^2}{2} & 0 & -\frac{2\alpha^2}{1-\sigma^2} \\
-\frac{2L^2\alpha}{\mu}&  \frac{\mu\alpha}{2} & 0 \\
-\frac{100L^2}{1-\sigma^2} & -\frac{70L^2}{1-\sigma^2} & 
\frac{1-\sigma^2}{2} - \frac{40L^2\alpha^2}{1-\sigma^2}
\end{array}
\right].
\end{align}
We then calculate its determinant as follows.
\begin{align*}
\det(I-G_\alpha)
= \frac{\mu\alpha\left(1-\sigma^2\right)}{4}  \left(\frac{1-\sigma^2}{2}-\frac{40L^2\alpha^2}{1-\sigma^2}\right)
-   \frac{280L^4\alpha^3}{\mu(1-\sigma^2)^2}
- \frac{100L^2\mu\alpha^3}{\left(1-\sigma^2\right)^2}
\triangleq I_1 - I_2 - I_3.
\end{align*}
If~$0<\alpha\leq\frac{1-\sigma^2}{\sqrt{160}L}$, then~$I_1\geq\frac{\mu\alpha\left(1-\sigma^2\right)^2}{16}$. If~$0<\alpha\leq\frac{\left(1-\sigma^2\right)^2\mu}{\sqrt{17920}L^2}$, then~$I_2\leq\frac{\mu\alpha\left(1-\sigma^2\right)^2}{64}$. If~$0<\alpha\leq\frac{\left(1-\sigma^2\right)^2}{80L}$, then~$I_3\leq\frac{\mu\alpha\left(1-\sigma^2\right)^2}{64}$. Combining the bounds on~$I_1,I_2$ and~$I_3$, we obtain a lower bound on~$\det(I-G_\alpha)$:
\begin{align}\label{det}
\text{If}~0<\alpha\leq\frac{\left(1-\sigma^2\right)^2}{\sqrt{17920}QL},\qquad
\det\left(I-G_\alpha\right)\geq\frac{\mu\alpha\left(1-\sigma^2\right)^2}{32}.
\end{align}
Next, we derive an upper bound for~$\operatorname{adj}\left(I-G_\alpha\right)$. Note that since the first row of~$H_\alpha$ is zero, we do not need to compute the first column of~$\operatorname{adj}\left(I-G_\alpha\right)$. We denote~$\left[\operatorname{adj}\left(I-G_\alpha\right)\right]_{i,j}$ as the~$i,j$th entry of~$\operatorname{adj}\left(I-G_\alpha\right)$.
\begin{align}
\left[\operatorname{adj}\left(I-G_\alpha\right)\right]_{1,2}
&= \frac{140L^2\alpha^2}{\left(1-\sigma^2\right)^2},
\qquad
\left[\operatorname{adj}\left(I-G_\alpha\right)\right]_{1,3}
= \frac{\mu\alpha^3}{1-\sigma^2}, \nonumber\\
\left[\operatorname{adj}\left(I-G_\alpha\right)\right]_{2,2}
&\leq\frac{(1-\sigma^2)^2}{4}, \qquad
\left[\operatorname{adj}\left(I-G_\alpha\right)\right]_{2,3}
=\frac{4L^2\alpha^3}{\mu(1-\sigma^2)},\nonumber\\
\left[\operatorname{adj}\left(I-G_\alpha\right)\right]_{3,2}
&= 35L^2, \qquad\qquad
\left[\operatorname{adj}\left(I-G_\alpha\right)\right]_{3,3}
= \frac{\mu\alpha(1-\sigma^2)}{4}. \nonumber
\end{align}
Using the entry-wise upper bound on~$\operatorname{adj}\left(I-G_\alpha\right)$ and the lower bound on~$\det\left(I-G_\alpha\right)$, we obtain an entry-wise upper bound on~$\left(I-G_\alpha\right)^{-1}$ as follows\footnote{We do not compute the first column of~$J_\alpha$ because it is not useful for bounding the spectral radius of~$J_\alpha H_\alpha$.}:
\begin{align*}
\left(I-G_\alpha\right)^{-1} = \frac{\left(I-G_\alpha\right)^*}{\det\left(I-G_\alpha\right)}\leq
\frac{32}{\mu\alpha\left(1-\sigma^2\right)^2}\left[
\begin{array}{cccc}
* & \frac{140L^2\alpha^2}{\left(1-\sigma^2\right)^2} & \frac{\mu\alpha^3}{1-\sigma^2} \\
* &  \frac{(1-\sigma^2)^2}{4} & \frac{4L^2\alpha^3}{\mu(1-\sigma^2)} \\
* & 35L^2 &  \frac{\mu\alpha(1-\sigma^2)}{4}
\end{array}
\right]
=
\left[
\begin{array}{cccc}
* & \frac{4480L^2\alpha}{\mu\left(1-\sigma^2\right)^4} & \frac{32\alpha^2}{(1-\sigma^2)^3} \\
* &  \frac{8}{\mu\alpha} & \frac{128L^2\alpha^2}{\mu^2(1-\sigma^2)^3} \\
* & \frac{1120L^2}{\mu\alpha\left(1-\sigma^2\right)^2} &  \frac{8}{1-\sigma^2}
\end{array}
\right] \triangleq J_\alpha.
\end{align*}
Since~$\mbox{rank}\left(J_\alpha H_\alpha\right)=1$, we have that:
\begin{align*}
\rho\left(J_\alpha H_\alpha\right)
= \mbox{trace}\left(J_\alpha H_\alpha\right)
= \frac{17920L^4\alpha^3}{\mu\left(1-\sigma^2\right)^4} + \frac{1920\alpha^2L^2}{(1-\sigma^2)^4}
+ \frac{32L^2\alpha}{\mu}
+ \frac{7680L^4\alpha^2}{\mu^2(1-\sigma^2)^4}.
\end{align*}
If~$\alpha =  \frac{\left(1-\sigma^2\right)^2}{200QL}$, based on Lemma~\ref{rho_monotone}, we have:
$$
\rho\left(\left(I-G_\alpha\right)^{-1}H_\alpha\right)\leq\rho\left(J_\alpha H_\alpha\right)
\leq \frac{17920}{40000} + \frac{1920}{40000}
 + \frac{6400}{40000} + \frac{7680}{40000} = 0.848.
$$
Since~$\left(I-G_\alpha\right)^{-1}H_\alpha$ is non-negative, based on Lemma~\ref{rho_bound_n}, there exists some weighted maximum norm~$\mn{\cdot}_\infty^{\mb{q}}$ such that~$\mn{\left(I-G_\alpha\right)^{-1}H_\alpha}_\infty^{\mb{q}} \leq 0.848 + 0.002 = 0.85$, which completes the proof.
\end{proof}

Next, we derive an upper bound for~$\rho\left(G_\alpha\right)$ when~$\alpha =  \frac{\left(1-\sigma^2\right)^2}{200QL}$ with the help of~\eqref{r1'}-\eqref{r3'}. 
\begin{lem}\label{rho_G}
If~$\alpha = \frac{\left(1-\sigma^2\right)^2}{200QL}$,~$\rho\left(G_\alpha\right)\leq 1-\frac{\left(1-\sigma^2\right)^2}{800Q^2}$.
\end{lem}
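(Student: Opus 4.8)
The plan is to invoke the first part of Lemma~\ref{rho_bound} with exactly the same positive test vector $\bds\epsilon=[\epsilon_1,\epsilon_2,\epsilon_3]^\top$ constructed in the proof of Lemma~\ref{rho}, namely $\epsilon_1=1$, $\epsilon_2=8Q^2$, $\epsilon_3=\frac{1350Q^2L^2}{(1-\sigma^2)^2}$. Concretely, I will show that for $\alpha=\frac{(1-\sigma^2)^2}{200QL}$ the entrywise inequality $G_\alpha\bds\epsilon\leq\bigl(1-\frac{(1-\sigma^2)^2}{800Q^2}\bigr)\bds\epsilon$ holds, which by Lemma~\ref{rho_bound} immediately yields $\rho(G_\alpha)\leq 1-\frac{(1-\sigma^2)^2}{800Q^2}$. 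Recalling the reformulation, this reduces to verifying that each right-hand side of \eqref{r1'}--\eqref{r3'}, evaluated at this $\alpha$ and $\bds\epsilon$, is at least $\frac{(1-\sigma^2)^2}{800Q^2}$.

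The heart of the argument is the second inequality \eqref{r2'}. Substituting $\epsilon_1/\epsilon_2=\frac{1}{8Q^2}$ and using $L^2=Q^2\mu^2$, its right-hand side collapses to $\frac{\mu\alpha}{2}-\frac{2L^2\alpha}{\mu}\cdot\frac{1}{8Q^2}=\frac{\mu\alpha}{2}-\frac{\mu\alpha}{4}=\frac{\mu\alpha}{4}$; and since $\mu\alpha=\frac{(1-\sigma^2)^2}{200Q^2}$, this equals precisely $\frac{(1-\sigma^2)^2}{800Q^2}$. Thus \eqref{r2'} is the binding constraint, and it is exactly what produces the bound claimed in the lemma.

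It then remains to confirm that \eqref{r1'} and \eqref{r3'} are slack. For \eqref{r1'}, plugging in $\epsilon_3/\epsilon_1$ together with $\alpha^2=\frac{(1-\sigma^2)^4}{40000Q^2L^2}$ reduces the subtracted term to $\frac{27}{400}(1-\sigma^2)$, leaving a right-hand side of $\frac{173}{400}(1-\sigma^2)$, which dwarfs $\frac{(1-\sigma^2)^2}{800Q^2}$ since $Q\geq1$ and $\sigma\in[0,1)$. For \eqref{r3'}, after substituting $\epsilon_1/\epsilon_3$, $\epsilon_2/\epsilon_3$ and $\alpha^2$ the three subtracted terms become $\frac{100(1-\sigma^2)}{1350Q^2}$, $\frac{560(1-\sigma^2)}{1350}$ and $\frac{(1-\sigma^2)^3}{1000Q^2}$; bounding $Q\geq1$ and $1-\sigma^2\leq1$ shows the right-hand side is at least $(1-\sigma^2)\bigl(\tfrac12-\tfrac{660}{1350}-\tfrac{1}{1000}\bigr)>\tfrac{1}{100}(1-\sigma^2)$, again comfortably above the threshold. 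The only mildly delicate point is the constant bookkeeping in \eqref{r3'}, whose slack is the smallest of the three — but it still clears $\frac{(1-\sigma^2)^2}{800Q^2}$ by roughly an order of magnitude, so no sharpening is needed. Collecting these three checks and applying Lemma~\ref{rho_bound} completes the proof.
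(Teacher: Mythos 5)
Your proof is correct and follows essentially the same route as the paper: the same test vector $\bds\epsilon=[1,\,8Q^2,\,\tfrac{1350Q^2L^2}{(1-\sigma^2)^2}]^\top$ is plugged into \eqref{r1'}--\eqref{r3'}, the second inequality is identified as the binding one yielding $\tfrac{\mu\alpha}{4}=\tfrac{(1-\sigma^2)^2}{800Q^2}$, and Lemma~\ref{rho_bound} is invoked. Your evaluation of \eqref{r1'} as $\tfrac{173(1-\sigma^2)}{400}$ is in fact the correct value (the paper's $\tfrac{373(1-\sigma^2)}{400}$ is a minor arithmetic slip), and since that constraint is slack either way, the conclusion is unaffected.
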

\begin{proof}
We solve for the smallest~$\gamma$ such that~\eqref{r1'}-\eqref{r3'} hold, given that~$\alpha =  \frac{\left(1-\sigma^2\right)^2}{200QL}$ and~$\bds\epsilon = [1,8Q^2,\frac{1350Q^2L^2}{\left(1-\sigma^2\right)^2}]^\top$. From~\eqref{r1'}, we have that:
\begin{align}\label{ga1}
\frac{1}{\gamma}\leq\frac{1-\sigma^2}{2} - \frac{2\alpha^2}{1-\sigma^2}\frac{\epsilon_3}{\epsilon_1}
= \frac{1-\sigma^2}{2} - \frac{2}{1-\sigma^2}\frac{\left(1-\sigma^2\right)^4}{200^2Q^2L^2}\frac{1350Q^2L^2}{(1-\sigma^2)^2}
= \frac{373(1-\sigma^2)}{400}.
\end{align}
From~\eqref{r2'}, we have that:
\begin{align}\label{ga2}
\frac{1}{\gamma}\leq
\frac{\mu\alpha}{2} - \frac{2L^2\alpha}{\mu}\frac{\epsilon_1}{\epsilon_2}
=\frac{\mu\alpha}{4} = \frac{\left(1-\sigma^2\right)^2}{800Q^2}.
\end{align}
From~\eqref{r3'}, we have that:
\begin{align*}
\frac{1}{\gamma}\leq
\frac{1-\sigma^2}{2}
-\frac{100L^2}{1-\sigma^2}\frac{\epsilon_1}{\epsilon_3}
- \frac{70L^2}{1-\sigma^2}\frac{\epsilon_2}{\epsilon_3}
- \frac{40L^2\alpha^2}{1-\sigma^2}
\impliedby
\frac{1}{\gamma}\leq
\frac{1-\sigma^2}{2}
-\frac{660Q^2L^2}{1-\sigma^2}\frac{1}{\epsilon_3}
- \frac{40L^2\alpha^2}{1-\sigma^2}.
\end{align*}
Therefore, we have:
\begin{align}\label{ga3}
&\frac{1}{\gamma}\leq
\frac{1-\sigma^2}{2}
-\frac{660Q^2L^2}{1-\sigma^2}\frac{(1-\sigma^2)^2}{1350Q^2L^2}
- \frac{40L^2}{1-\sigma^2}\frac{(1-\sigma^2)^2}{200^2Q^2L^2}
= \frac{1-\sigma^2}{90} - \frac{1-\sigma^2}{1000Q^2}. \nonumber\\
\impliedby 
&\frac{1}{\gamma}\leq \frac{91\left(1-\sigma^2\right)}{9000}.
\end{align}
Combining~\eqref{ga1},~\eqref{ga2} and~\eqref{ga3}, we have that: if~$\alpha = \frac{\left(1-\sigma^2\right)^2}{200QL}$,
\begin{align}\label{ga}
\frac{1}{\gamma} \leq\min\left\{\frac{373(1-\sigma^2)}{400},\frac{(1-\sigma^2)^2}{800Q^2},\frac{91\left(1-\sigma^2\right)}{9000}\right\} = \frac{(1-\sigma^2)^2}{800Q^2},
\end{align} 
which completes the proof.
\end{proof}
We are now ready to present our main results, that is, the outer loop of~\textbf{\texttt{GT-SVRG}} achieves~$\epsilon$-accuracy in a constant time, up to a logarithmic factor. We note that the weighted matrix norm of the form~$\mn{\cdot}_\infty^{\mb{w}}$ is induced by the weighted maximum vector norm~$\|\cdot\|_{\infty}^{\mb{w}}$, where~$\|\mb{x}\|_\infty^{\mb{w}} = \max_{i}|x_i|/w_i$ for~$\mb{x}\in\mathbb{R}^d$~\cite{matrix_analysis,bertsekas_PDC_book}.
\begin{theorem}\label{cov_outer}
If~$\alpha = \frac{(1-\sigma^2)^2}{200QL}$ and~$K \geq \frac{801Q^2}{(1-\sigma^2)^2}\log(20c)$, where~$c\geq1$ is some constant, the following holds,~$\forall t\geq0$:
$$
\left\|\mb{u}^{t+1,0}\right\|_\infty^{\mb{q}}
\leq 0.9\left\|\mb{u}^{t,0}\right\|_\infty^{\mb{q}}.
$$
\end{theorem}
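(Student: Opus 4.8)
The plan is to push the outer-loop recursion \eqref{recursion} through the weighted max-norm $\|\cdot\|_\infty^{\mb{q}}$ fixed in the lemma immediately preceding Lemma~\ref{rho_G}, write the per-outer-iteration contraction factor as $\mn{G_\alpha^K}_\infty^{\mb{q}}+\mn{(I-G_\alpha)^{-1}H_\alpha}_\infty^{\mb{q}}$, bound the second summand by $0.85$ using that lemma, and choose $K$ large enough to push the first summand below $0.05$, for a total factor of $0.9$.

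First, the prescribed $\alpha=\frac{(1-\sigma^2)^2}{200QL}$ satisfies $\alpha\le\frac1{8QL}$ (so the entrywise matrix recursion defining $G_\alpha,H_\alpha$, and all the earlier step-size restrictions it subsumes, are in force) and $\alpha\le\frac{(1-\sigma^2)^2}{105QL}$ (so $\rho(G_\alpha)<1$ by Lemma~\ref{rho}); hence \eqref{recursion} holds with $G_\alpha^K+(I-G_\alpha)^{-1}H_\alpha$ entrywise non-negative. Because $\|\cdot\|_\infty^{\mb{q}}$ is monotone on the non-negative orthant and is the vector norm inducing the submultiplicative matrix norm $\mn{\cdot}_\infty^{\mb{q}}$, applying $\|\cdot\|_\infty^{\mb{q}}$ to \eqref{recursion}, then the triangle inequality for $\mn{\cdot}_\infty^{\mb{q}}$, then the preceding lemma, yields
\begin{align*}
\|\mb{u}^{t+1,0}\|_\infty^{\mb{q}}
&\le\Big(\mn{G_\alpha^K}_\infty^{\mb{q}}+\mn{(I-G_\alpha)^{-1}H_\alpha}_\infty^{\mb{q}}\Big)\|\mb{u}^{t,0}\|_\infty^{\mb{q}}\\
&\le\Big(\mn{G_\alpha^K}_\infty^{\mb{q}}+0.85\Big)\|\mb{u}^{t,0}\|_\infty^{\mb{q}},
\end{align*}
so it remains only to prove $\mn{G_\alpha^K}_\infty^{\mb{q}}\le0.05$.

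For this, Lemma~\ref{rho_G} gives $\rho(G_\alpha)\le 1-\frac{(1-\sigma^2)^2}{800Q^2}$, which is strictly below $r:=1-\frac{(1-\sigma^2)^2}{801Q^2}\in(0,1)$. Writing $\mn{A}_\infty^{\mb{q}}=\|D^{-1}AD\|_\infty$ with $D=\operatorname{diag}(\mb{q})$, the matrix $D^{-1}G_\alpha D$ is similar to $G_\alpha$ and so has spectral radius $\rho(G_\alpha)<r$; by Gelfand's formula $\mn{G_\alpha^K}_\infty^{\mb{q}}=\|(D^{-1}G_\alpha D)^K\|_\infty\le r^K$ for all large $K$, and absorbing the remaining finitely many indices into the constant, there is $c\ge1$, independent of $K$, with $\mn{G_\alpha^K}_\infty^{\mb{q}}\le c\,r^K$ for every $K$. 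Since $\log\frac1{1-x}\ge x$ on $[0,1)$, taking $x=\frac{(1-\sigma^2)^2}{801Q^2}$ gives $\log\frac1r\ge\frac{(1-\sigma^2)^2}{801Q^2}$, so the hypothesis $K\ge\frac{801Q^2}{(1-\sigma^2)^2}\log(20c)$ forces $K\log\frac1r\ge\log(20c)$, i.e.\ $c\,r^K\le\frac1{20}$. Hence $\mn{G_\alpha^K}_\infty^{\mb{q}}\le0.05$ and the display collapses to $\|\mb{u}^{t+1,0}\|_\infty^{\mb{q}}\le0.9\|\mb{u}^{t,0}\|_\infty^{\mb{q}}$; this is also precisely where the constant $c$ of the statement, and the choice $801$ rather than $800$, enter.

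The main obstacle is the step $\mn{G_\alpha^K}_\infty^{\mb{q}}\le c\,r^K$: the weight $\mb{q}$ was engineered (in the lemma before Lemma~\ref{rho_G}) to contract $(I-G_\alpha)^{-1}H_\alpha$, not $G_\alpha$, so $G_\alpha$ need not be a $\mn{\cdot}_\infty^{\mb{q}}$-contraction and one cannot simply write $\mn{G_\alpha^K}_\infty^{\mb{q}}\le\rho(G_\alpha)^K$; the gap between $\rho(G_\alpha)$ and the decay rate actually seen in the $\mb{q}$-norm must be absorbed into the $K$-independent constant $c$ via Gelfand's formula (equivalently, via Lemma~\ref{rho_bound_n} applied in an auxiliary norm together with equivalence of norms). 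Everything else --- verifying the step-size inequalities, the monotonicity and triangle inequality in the weighted max-norm, and the elementary estimate $\log\frac1{1-x}\ge x$ --- is routine bookkeeping.
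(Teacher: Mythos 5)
Your proposal is correct and follows essentially the same route as the paper: the same decomposition $\mn{G_\alpha^K}_\infty^{\mb{q}}+\mn{(I-G_\alpha)^{-1}H_\alpha}_\infty^{\mb{q}}$, the same $0.85$ bound on the second term, and the same bound $\mn{G_\alpha^K}_\infty^{\mb{q}}\leq c\,r^K$ with $r=1-\frac{(1-\sigma^2)^2}{801Q^2}$ driving the choice of $K$. The only cosmetic difference is that you justify the constant $c$ via Gelfand's formula, whereas the paper invokes Lemma~\ref{rho_bound_n} to build an auxiliary weighted norm $\mn{\cdot}_\infty^{\widetilde{\mb{q}}}$ in which $G_\alpha$ contracts at rate $r$ and then appeals to equivalence of norms; as you note yourself, these are interchangeable, and you correctly identify that $c$ is exactly the gap between the $\mb{q}$-norm and the spectral radius of $G_\alpha$.
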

\begin{proof}
Recall the recursion in~\eqref{recursion}:~$\forall t\geq0$,
\begin{align*}
\mb{u}^{t+1,0} 
\leq\left(G_\alpha^K+\left(I-G_\alpha\right)^{-1}H_\alpha\right)\mb{u}^{t,0}.
\end{align*} 
Since~$\|\cdot\|_\infty^{\mb{q}}$ is a monotone vector norm, we have the following:
\begin{align}\label{rate0}
\left\|\mb{u}^{t+1,0}\right\|_\infty^{\mb{q}}
\leq\mn{G_\alpha^K+\left(I-G_\alpha\right)^{-1}H_\alpha}_{\infty}^{\mb{q}}\left\|\mb{u}^{t,0}\right\|_\infty^{\mb{q}}
\leq \left(\mn{G_\alpha^K}_{\infty}^{\mb{q}}+0.85\right)\left\|\mb{u}^{t,0}\right\|_\infty^{\mb{q}},
\end{align}
where we used Lemma~\ref{rho_G}. Based on Lemma~\ref{rho_G}, when~$\alpha = \frac{\left(1-\sigma^2\right)}{200QL}$, we have~$\rho\left(G_\alpha\right)\leq1-\frac{\left(1-\sigma^2\right)^2}{800Q^2}$. Then from Lemma~\ref{rho_bound_n}, there exists a weighted matrix norm~$\mn{\cdot}_\infty^{\widetilde{\mb{q}}}$ such that~$\mn{G_\alpha}_\infty^{\widetilde{\mb{q}}}\leq 1-\frac{\left(1-\sigma^2\right)^2}{801Q^2}$. Since all norms are equivalent in finite-dimensional vector spaces~\cite{matrix_analysis}, there exists a positive constant~$c\geq1$, such that~$\mn{A}_\infty^{\widetilde{\mb{q}}}\leq c\mn{A}_\infty^{\mb{q}}$ for all~$A\in\mathbb{R}^{3\times3}$.\footnote{The constant~$c$ is greater or equal to~$1$ because every induced matrix norm is minimal~\cite{matrix_analysis}.} We now proceed with~\eqref{rate0} as follows.
\begin{align}\label{rate1}
\left\|\mb{u}^{t+1,0}\right\|_\infty^{\mb{q}}
\leq \left(c\left(1-\frac{\left(1-\sigma^2\right)^2}{801Q^2}\right)^K+0.85\right)\left\|\mb{u}^{t,0}\right\|_\infty^{\mb{q}}\leq\left(c\exp\left\{-\frac{\left(1-\sigma^2\right)^2}{801Q^2}K\right\}+0.85\right)\left\|\mb{u}^{t,0}\right\|_\infty^{\mb{q}},
\end{align}
where we used the inequality~$1+x\leq\exp\{x\},\forall x$. Therefore, we have:
\begin{align*}
\text{If}~K\geq\frac{801Q^2}{\left(1-\sigma^2\right)^2}\log (20c),
\qquad
\left\|\mb{u}^{t+1,0}\right\|_\infty^{\mb{q}}
\leq 0.9\left\|\mb{u}^{t,0}\right\|_\infty^{\mb{q}},
\end{align*}
which completes the proof.
\end{proof}
\textbf{Proof of Theorem~\ref{main_theorem}:} We note that during each inner loop of~\textbf{\texttt{GT-SVRG}}, each node~$i$ computes~$(m_i + 2K)$ local component gradients. Therefore, the total number of local component gradient computations required to achieve~$\epsilon$-accuracy is:
$$
\mc{O}\left(\left(M + \frac{Q^2}{(1-\sigma)^2}\right)\log\frac{1}{\epsilon}\right),
$$  
where~$Q$ is the condition number of the global objective function~$f$,~$1-\sigma$ is the spectral gap of the weight matrix of the graph and~$M$ is the largest number data points at all nodes.

\newpage		
	\bibliographystyle{IEEEbib}
	\bibliography{sample1.bib}
\end{document}